\theoremstyle{plain}
\newtheorem{thm}[subsection]{Theorem}
\newtheorem{lem}[subsection]{Lemma}
\newtheorem{prop}[subsection]{Proposition}
\newtheorem{cor}[subsection]{Corollary}
\theoremstyle{definition}
\newtheorem{rem}[subsection]{Remark}
\newtheorem{defn}[subsection]{Definition}
\newcommand{\Rmnum}[1]{\expandafter\@slowromancap\romannumeral #1@}
\begin{document}

\title{A Transversality theorem for some Classical Varieties}
\author {Chih-Chi Chou}
\address{University of Illinois at Chicago, Department of Mathematics, Statistics and Computer Science, Chicago, IL 60607\\ } \email{cchou20@uic.edu.}

\begin{abstract}
In 2009, de Fernex and Hacon  (\cite{T}) proposed a generalization of the notion of the singularities to normal varieties that are not 
$\mathbb{Q}\mathrm {-Gorenstein}$.   
Based on their work,  we generalize Kleiman's transversality theorem to subvarieties with log terminal or log canonical singularities.
We also show that some classical varieties, 
such as generic determinantal varieties, $W^r_d$ for general smooth curves, 
and certain Schubert varieties in $G(k, n)$ are log terminal.

\end{abstract}
\maketitle
%%%%%%%%%%%%%%%%%%%%%%%%

\setcounter{tocdepth}{1} \tableofcontents

\section{Introduction}
In higher dimensional birational geometry, one studies pair 
$(X, \Delta)$ consisting of a variety $X$ and a boundary divisor $\Delta$. 
The boundary, $\Delta$, plays several useful roles,  
such as  an effective divisor to make 
the canonical divisor $\mathbb{Q}$-Cartier, or a tool to apply adjunction formula to do induction.
However, in many cases there is not a canonical choice for $\Delta$.  
In \cite{T}, de Fernex and Hacon propose a new approach to birational geometry.
Instead of using  boundary divisor, they propose a more direct way to pull back non $\mathbb{Q}$-Cartier divisors.
They  define a relative canonical divisor which  generalizes the classical one, and then extend the singularity theory to non $\mathbb{Q} \mathrm{-Gorenstein}$ varieties.

They show that if $X$ is log terminal (resp. log canonical) in their sense, 
then there exists a boundary $\Delta$ such that $(X,\Delta)$ is Kawamata log terminal (resp. log canonical) in the old sense.
On the other hand, Kleiman's  theorem plays a central role in the study of intersection theory and enumerative geometry.
In this paper, based on de Fernex and Hacon's work we show the following theorem.
 
 \begin{thm} (Kleiman's Transversality Theorem)
 Let $X$ be a homogeneous variety with a group variety $G$ acting on it.
Let $Y, Z$ be subvarieties of $X$ such that $Z$ is smooth and $Y$ is log canonical (resp. log terminal). 
For any $g\in G$, denote $Y^g$ the transition of $Y$ by $g$.
Then there exists a non empty open $U\subset G$ such that
 $\forall g\in U$, $Y^g \times _X Z$ is log canonical (resp. log terminal). 
 \end{thm}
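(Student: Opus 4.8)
The plan is to reduce the statement to a single total space mapping to the group, transfer the de Fernex--Hacon condition across a smooth morphism, and then restrict it to a general fibre. First I would set up the universal intersection. Because the action is transitive, the morphism
\[
\Phi: G\times Z \to X, \qquad \Phi(g,z)=g^{-1}z,
\]
is smooth and surjective: its differential already surjects onto $T_{g^{-1}z}X$ in the $G$-directions, since orbit maps of a transitive action are submersions in characteristic $0$, and $Z$ is smooth. Set $W:=\Phi^{-1}(Y)\subseteq G\times Z$. The projection $h:W\to Y$ is the base change of $\Phi$ along $Y\hookrightarrow X$, hence $h$ is smooth; in particular $W$ is a normal variety (smooth over the normal variety $Y$, and irreducible when $G$ is connected). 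Writing $p:W\to G$ for the first projection, the fibre over $g$ is
\[
p^{-1}(g)=\{z\in Z: g^{-1}z\in Y\}=Y^g\cap Z=Y^g\times_X Z,
\]
so the theorem is exactly the assertion that the general fibre of $p$ is log terminal (resp. log canonical).

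Next I would move the singularity hypothesis onto $W$ using the characterization recalled in the introduction. Since $Y$ is log terminal (resp. log canonical) in the sense of de Fernex--Hacon, by \cite{T} there is an effective boundary $\Delta_Y$ with $(Y,\Delta_Y)$ klt (resp. lc) in the usual sense. Because $h:W\to Y$ is smooth, the pullback $\Delta_W:=h^*\Delta_Y$ yields a pair $(W,\Delta_W)$ that is again klt (resp. lc): on an $h$-relative log resolution the discrepancies of $(W,\Delta_W)$ agree with those of $(Y,\Delta_Y)$, since $K_{W}=K_{W/Y}+h^*K_Y$ and $K_{W/Y}$ is a line bundle. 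Thus the non-$\mathbb{Q}$-Gorenstein variety $Y$ is replaced by an honest klt/lc pair $(W,\Delta_W)$ whose fibres over $G$ are the varieties we care about.

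The heart of the argument is then a Bertini-type statement for the fibres of $p:W\to G$. In characteristic $0$ I would fix a log resolution $\mu:\widetilde W\to W$ of $(W,\Delta_W)$ and apply generic smoothness to the composite $\widetilde W\to W\to G$: for $g$ in a dense open $U\subseteq G$ the fibre $\widetilde W_g$ is smooth, meets every stratum of the exceptional and boundary divisors transversally, and maps to a log resolution of $(W_g,\Delta_W|_{W_g})$ with the same discrepancies, whence $(W_g,\Delta_W|_{W_g})$ is klt (resp. lc) for all $g\in U$. Shrinking $U$, generic smoothness of $p$ together with Kleiman's classical transversality guarantees that for general $g$ the fibre $W_g$ is a normal irreducible variety of the expected dimension, so that its de Fernex--Hacon singularities are defined. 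Applying the converse implication of the de Fernex--Hacon characterization (that existence of a klt, resp. lc, boundary forces log terminal, resp. log canonical, singularities) to $(W_g,\Delta_W|_{W_g})$, I conclude that $W_g=Y^g\times_X Z$ is log terminal (resp. log canonical) for every $g\in U$, as claimed.

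The step I expect to be the main obstacle is the transfer of the singularity condition down to a general fibre: guaranteeing that restriction to $W_g$ preserves the klt/lc property while simultaneously controlling the geometry of the fibre. Two points require care, namely that the general fibre be genuinely normal and irreducible (so that it is a subvariety to which the de Fernex--Hacon theory applies), which is where Kleiman's original input enters, and that no discrepancy be lost under restriction. If instead one bypasses the boundary characterization and argues directly, the obstacle shifts to showing that the saturated pullbacks of the reflexive sheaves $\mathcal{O}_W(mK_W)$ defining the limiting relative canonical divisors restrict compatibly to $W_g$; in either formulation this is handled by generic flatness and generic smoothness, and it is precisely here that the smoothness of $Z$, the smoothness of $\Phi$, and the characteristic-$0$ hypothesis are used.
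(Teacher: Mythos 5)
Your proposal is correct, and its key middle step takes a genuinely different route from the paper's. Both arguments build the same universal intersection $W=\{(g,y,z)\in G\times Y\times Z : gy=z\}$ and finish the same way, by applying a Bertini-type statement to $W\rightarrow G$, whose fibers are exactly the translated intersections $Y^g\times_X Z$; the difference lies in how one proves that $W$ itself is lc (resp.\ lt). The paper projects $W$ to the smooth factor $Z$: the fibers of this projection are the $\Phi^{-1}(z)$ for $\Phi : G\times Y\rightarrow X$, which are shown to be lc (resp.\ lt) because they admit smooth surjections onto $Y$ (Proposition 2.7), and then the paper invokes inversion of adjunction over the smooth base $Z$ (Proposition 2.10, which rests on the $m$-compatible boundary Lemma 2.9 and Koll\'ar--Mori's Theorem 5.50) to pass from the fibers back to $W$. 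You instead observe that the \emph{other} projection $W\rightarrow Y$ is itself smooth, because $\Phi: G\times Z\rightarrow X$, $(g,z)\mapsto g^{-1}z$, is a smooth morphism (orbit maps of a transitive action are submersions in characteristic $0$, and $Z$ and the homogeneous space $X$ are smooth), so that $W$ is lc (resp.\ lt) by smooth pullback alone --- either the paper's Proposition 2.7 or your boundary version via Proposition 2.4. In other words, you exploit the smoothness of $Z$ at the start (to make the universal morphism smooth) rather than at the end (as the base for inversion of adjunction), and this removes Lemma 2.9, Proposition 2.10, and inversion of adjunction from the proof entirely; what the paper's longer route buys is Proposition 2.10 itself, a statement of independent interest. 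Two small points to tighten in your write-up: the general fiber $Y^g\times_X Z$ need not be irreducible (it can even be a finite set of points), so the lc/lt conclusion must be read componentwise, just as in the paper; and your final step (normality of the general fiber together with preservation of discrepancies under restriction to it) is precisely the content of the paper's Bertini Proposition 2.8, which you could simply cite once $W$ is known to be lc (resp.\ lt), rather than redoing the generic-smoothness argument on a log resolution --- your version of that argument is essentially a proof of Proposition 2.8, and is fine, but the normality and expected-dimension claims for the general fiber deserve the more careful treatment that the cited proposition (via Koll\'ar's restriction theorem) provides.
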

For the original version of Kleiman's Transversality theorem, see \cite{Kl} or \cite{Har} Theorem \Rmnum{3}.10.8.

There are some classical varieties that are well known to be normal but not $\mathbb{Q}\mathrm {-Gorenstein}$.  
To understand their singularities, one way is to find out a suitable boundary.
 However, for many of these varieties there is not a natural choice of the boundary divisor .
 Using the generalized notion of singularity, we can study the singularities in a more direct way.
 In this paper, we give a criterion for a normal variety to be log terminal.
 
 \begin{thm}
 Let $ f: Y \rightarrow X$ be a small resolution of a normal variety $X$, such that $Y$ is smooth and $-K_Y$ is relatively nef. Then $X$ is log terminal (in the sense of de Fernex and Hacon).
 Moreover, there is a boundary $\Delta$ such that $(X,\Delta)$ is canonical (in the old sense). 
 \end{thm}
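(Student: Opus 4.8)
The plan is to upgrade the relative nefness of $-K_Y$ to relative \emph{semiampleness}, and then to read off the singularities of $X$ from the resulting base-point-free anticanonical system on the small resolution. \emph{Step 1: $-K_Y$ is $f$-semiample.} Since $Y$ is smooth (hence klt, with $K_Y$ Cartier) and $f$ is projective birational, I would apply the relative Kawamata--Shokurov base-point-free theorem to the $f$-nef Cartier divisor $D=-K_Y$. One must check that $aD-K_Y=(a+1)(-K_Y)$ is $f$-nef (it is a positive multiple of the $f$-nef divisor $-K_Y$) and $f$-big. The key observation is that $f$-bigness is automatic: $f$ is birational, so the relative dimension is $0$, and $-K_Y$ is relatively effective because $f_*\mathcal{O}_Y(-mK_Y)=\mathcal{O}_X(-mK_X)\neq 0$; hence the relative Iitaka dimension of $-K_Y$ equals the relative dimension, i.e.\ $-K_Y$ is $f$-big. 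Thus $-mK_Y$ is $f$-free for some $m>0$.

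\emph{Step 2: $X$ is log terminal in the sense of de Fernex--Hacon.} For a proper birational $g\colon W\to X$ the relative canonical divisor is $K_{W/X}=K_W-g^{\natural}K_X$, where $g^{\natural}K_X=\lim_k\frac1k D_k$ with $\mathcal{O}_W(-D_k)$ the divisorial part of $\mathcal{O}_X(-kK_X)\cdot\mathcal{O}_W$, and the discrepancies are computed from $K_{W/X}$. Because $f$ is small, $f_*\mathcal{O}_Y(-kK_Y)=\mathcal{O}_X(-kK_X)$ and the relative base ideal of $|-kK_Y|$ over $X$ is supported in codimension $\ge 2$; its divisorial part therefore vanishes and $f^{\natural}K_X=K_Y$, so $K_{Y/X}=0$. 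On a higher model $p\colon W\to Y$ with $h=f\circ p$, the failure of functoriality of $\natural$ contributes $h^{\natural}K_X=p^{*}K_Y+F$, where $F\ge 0$ is the asymptotic divisorial part of the $f$-relative base locus of $-K_Y$ pulled back to $W$; consequently $K_{W/X}=K_{W/Y}-F$. By Step 1 the system $|-mK_Y|$ is $f$-free, so this relative base locus is empty, and freeness is preserved by $p^{*}$; hence $F=0$ on every model $W$. Therefore $K_{W/X}=K_{W/Y}\ge 0$, with $\mathrm{ord}_E K_{W/X}=a(E;Y)\ge 1$ for every $h$-exceptional divisor $E$ (these are exactly the $p$-exceptional divisors, since $f$ is small). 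In particular every discrepancy is $>-1$ (indeed $\ge 0$), so $X$ is log terminal --- in fact terminal --- in the sense of de Fernex--Hacon.

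\emph{Step 3: the canonical boundary.} Fix $m$ with $-mK_Y$ $f$-free and let $\Delta_Y$ be a general member of the relative system $|-mK_Y|$ over $X$; set $\Delta:=\frac1m f_*\Delta_Y\ge 0$. Since $\Delta_Y\sim_f -mK_Y$, we get $K_Y+\frac1m\Delta_Y\sim_{f,\mathbb{Q}}0$, so this divisor descends to a $\mathbb{Q}$-Cartier divisor on $X$; pushing forward and using $f_*f^{*}=\mathrm{id}$ gives that $K_X+\Delta$ is $\mathbb{Q}$-Cartier with $f^{*}(K_X+\Delta)=K_Y+\frac1m\Delta_Y$. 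As $f$ is small there is no exceptional correction, so $f^{-1}_{*}\Delta=\frac1m\Delta_Y$ and $(X,\Delta)$ is crepant to $(Y,\frac1m\Delta_Y)$; the two pairs thus share all discrepancies over $X$. By relative Bertini the general $\Delta_Y$ is suitably mild, and since its coefficient $1/m<1$ is small, $(Y,\frac1m\Delta_Y)$ is canonical; hence $(X,\Delta)$ is canonical in the classical sense, with $\Delta$ a genuine boundary.

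\emph{Main obstacle.} The crux is Step 1: recognizing that the relative bigness hypothesis of the base-point-free theorem is automatic for the birational $f$, so that mere $f$-nefness of $-K_Y$ already forces $f$-semiampleness. The remaining care lies in Step 2, in tracking the sign and the non-functoriality of the de Fernex--Hacon pullback so that the vanishing of the relative base locus genuinely yields $F=0$ on all models, together with the relative Bertini argument underlying the canonicity of $(Y,\frac1m\Delta_Y)$ in Step 3.
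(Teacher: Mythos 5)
Your proposal is correct and follows essentially the same route as the paper: the heart of both arguments is the relative base-point-free theorem applied to the $f$-nef divisor $-K_Y$ (with $f$-bigness automatic because $f$ is birational), combined with smallness to identify $f_*\mathcal{O}_Y(-mK_Y)$ with $\mathcal{O}_X(-mK_X)$, so that $\mathcal{O}_X(-mK_X)\cdot \mathcal{O}_Y \cong \mathcal{O}_Y(-mK_Y)$ is invertible, $K_{m,Y/X}=0$, and higher models contribute only the nonnegative discrepancies over the smooth $Y$. Your Step 2 reproves inline what the paper invokes as Lemma 2.3 (functoriality of the natural pullback when $\mathcal{O}_X(-mK_X)\cdot\mathcal{O}_Y$ is invertible), and your Step 3 boundary $\frac{1}{m}f_*\Delta_Y$ is an explicit instance of the paper's $m$-compatible boundary; these are presentational rather than substantive differences.
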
 
Here small means the exceptional locus is of codimension bigger than two.
Then we apply this theorem to some classical varieties such as generic determinatal varieties,
$W^r_d(C)$ for general smooth curve $C$, 
and certain Schubert varieties in $G(k, n)$.
For each of them, we construct a small resolution with a relatively nef anti-canonical divisor.
Then we conclude that they all have log terminal singularities.
Besides, there are boundary divisors such that these varieties have canonical singularities as pairs. 
In particular, they have rational singularities.

The organization of this paper is as follows. 
In the second section, we recall some notions from \cite{T} and also prove some propositions which will be used 
in the proofs of the main theorems.
The proofs of the main theorems are in section three.
Examples of log terminal varieties are given in the last three sections.

To simplify the notation, we will denote log canonical (resp. log terminal) by lc (resp. lt).
If we want to emphasize that it is the old notion, we will say lc (resp. lt) pair. 
Throughout the paper, the ground field is the complex numbers.

\subsection*{Acknowledgments}
The author would like to thank Izzet Coskun and Lawrence Ein for their help and suggestions, and his officemates Luigi Lombardi and Lei Song for their encouragement and stimulating conversations.

\section{Preliminaries}
In this section, we recall some definitions and prove some lemmas that will be used in the proofs of the main theorems.
First, we would like to generalize the notion of relative canonical divisor to normal varieties. The idea and notation will be mainly based on the paper \cite{T}. 

\begin{defn}
Let $ f: Y \rightarrow X $ be a morphism between two normal varieties.
Since $Y$ is normal, for any  prime divisor $E$  there is a valuation $V_E(\cdot)$ defined by the local ring of $E$.
For any ideal sheaf $\mathcal{I}$ on $Y$,  
the valuation $Val_E (\mathcal{I})$ is defined as
\begin{equation*}
Val_E (\mathcal{I}) :=min \{ V_E (\phi) \mid \phi \in \mathcal{I} (U), U \cap E \neq \emptyset \} 
\end{equation*}
For any Weil divisor $D$ on $X$, we  define the natural pull back as 
\begin{equation*}
f^{\natural} (D) := div(\mathcal{O}_X(-D) \cdot \mathcal{O}_Y ):=\sum _{E\subset Y} Val_E (\mathcal{O}_X(-D) \cdot \mathcal{O}_Y).
\end{equation*}
\end{defn}
One of the properties of the natural pull back is that
\begin{equation*}
\mathcal{O}_Y(-f^{\natural}D)=(\mathcal{O}_X(-D)\cdot \mathcal{O}_Y)^{\vee \vee}.
\end{equation*}

The natural pull back usually does not have the homogeneity property, i.e, $f^{\natural} (m K_X) \neq m  f^{\natural} (K_X)$ (See \cite{T} for example). As a result, instead of defining the relative canonical divisor directly, we have the following: 
\begin{defn}
The {\it {{m-th limiting relative canonical $\mathbb{Q}$ divisor}}} $K_{m, Y/X}$ is
\begin{equation*}
K_{m, Y/X}:= K_Y-\frac{1}{m}  f^{\natural} (mK_X).
\end{equation*}
Given any prime divisor $F$ on $Y$, we define the  {\it {{m-th limiting discrepancy}}} of $X$ to be 
\begin{equation*}
a_{m,F}(X):=\mathrm{ord}_F(K_{m, Y/X})
\end{equation*}
$X$ is said to be lc (resp. lt) if there is some positive integer $m_0$ such that $a_{m_0,F}(X) \ge -1$ $(resp. >-1)$ for every prime divisor $F$ over $X$. 

\end{defn}
 
 Let $g: V \rightarrow Y$ and $f: Y \rightarrow X$ be two birational morphisms, usually we do not expect 
 that $(fg)^{\natural}(mK_X)=g^{\natural}(f^{\natural}mK_X)$.
 However, the equality holds when $\mathcal{O}_X(-mK_X)\cdot \mathcal{O}_Y$ is an invertible sheaf (Lemma 2.7 in \cite{T}).  
A consequence of this property is the following lemma,
\begin{lem}(Lemma 3.5 in \cite{T})
Let m be a positive interger, and let $f:Y\rightarrow X$ be a proper birational morphism from a normal variety $Y$ such that $mK_Y$ is Cartier and $\mathcal{O}_X(-mK_X)\cdot \mathcal{O}_Y$ is invertible.
Then for every birational morphism $g:V\rightarrow Y$ we have 
\begin{equation*}
K_{m, V/X}=K_{m, V/Y}+ g^*K_{m, Y/X}
\end{equation*}
\end{lem}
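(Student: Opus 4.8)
The plan is to reduce the claimed identity of $\mathbb{Q}$-divisors to a single identity among natural pullbacks, and then dispatch that identity using the two invertibility hypotheses. First I would write out the three terms from the definition of the $m$-th limiting relative canonical divisor: $K_{m,V/X}=K_V-\frac{1}{m}(fg)^{\natural}(mK_X)$, $K_{m,V/Y}=K_V-\frac{1}{m}g^{\natural}(mK_Y)$, and $g^{*}K_{m,Y/X}=g^{*}K_Y-\frac{1}{m}g^{*}(f^{\natural}(mK_X))$. Substituting these into the desired equation and cancelling the common $K_V$, the assertion becomes equivalent to
\begin{equation*}
(fg)^{\natural}(mK_X)=g^{\natural}(mK_Y)-m\,g^{*}K_Y+g^{*}(f^{\natural}(mK_X)).
\end{equation*}
Before doing this I would check that $g^{*}K_{m,Y/X}$ is even defined, i.e.\ that $K_{m,Y/X}$ is $\mathbb{Q}$-Cartier; this will follow from the Cartier-ness observations below, since $mK_Y$ and $f^{\natural}(mK_X)$ are both Cartier.

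The key structural observations come from the two hypotheses. Since $mK_Y$ is Cartier by assumption, the natural pullback along $g$ coincides with the ordinary pullback, so $g^{\natural}(mK_Y)=g^{*}(mK_Y)=m\,g^{*}K_Y$; hence the first two terms on the right cancel, and it remains only to prove $(fg)^{\natural}(mK_X)=g^{*}(f^{\natural}(mK_X))$. For this I would first note that the hypothesis that $\mathcal{O}_X(-mK_X)\cdot\mathcal{O}_Y$ is invertible, combined with the identity $\mathcal{O}_Y(-f^{\natural}(mK_X))=(\mathcal{O}_X(-mK_X)\cdot\mathcal{O}_Y)^{\vee\vee}$ recorded right after the pullback definition, forces $\mathcal{O}_Y(-f^{\natural}(mK_X))$ to be invertible (the double dual of an invertible sheaf being the sheaf itself). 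In other words, $f^{\natural}(mK_X)$ is itself a Cartier divisor on $Y$.

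Granting this, the two remaining ingredients finish the argument. On one hand, because $\mathcal{O}_X(-mK_X)\cdot\mathcal{O}_Y$ is invertible, the composition property (Lemma 2.7 in \cite{T}) applies and gives $(fg)^{\natural}(mK_X)=g^{\natural}(f^{\natural}(mK_X))$. On the other hand, since $f^{\natural}(mK_X)$ is Cartier, its natural pullback along $g$ again agrees with the ordinary pullback, so $g^{\natural}(f^{\natural}(mK_X))=g^{*}(f^{\natural}(mK_X))$. Chaining these two equalities yields exactly the reduced identity, completing the proof.

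The main obstacle I anticipate is structural rather than computational: one has to be careful that the natural pullback $g^{\natural}$ genuinely reduces to the ordinary pullback $g^{*}$ on Cartier divisors, so that no codimension-one behaviour is lost under the double-dualization, and that the hypothesis on $\mathcal{O}_X(-mK_X)\cdot\mathcal{O}_Y$ really does upgrade $f^{\natural}(mK_X)$ to a Cartier divisor. Once these two facts are pinned down, and once the composition property is invoked in precisely the regime where it is valid, the rest is the straightforward bookkeeping of cancelling $K_V$ and the Cartier terms.
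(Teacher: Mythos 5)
Your proof is correct and follows exactly the route the paper intends: the paper presents this lemma (quoted from \cite{T} without proof) as a direct consequence of the composition property $(fg)^{\natural}(mK_X)=g^{\natural}(f^{\natural}(mK_X))$ under the invertibility hypothesis (Lemma 2.7 in \cite{T}), which---combined with the observation that the natural pullback of a Cartier divisor is the ordinary pullback, so both $g^{\natural}(mK_Y)=m\,g^{*}K_Y$ and $g^{\natural}(f^{\natural}(mK_X))=g^{*}(f^{\natural}(mK_X))$---is precisely your argument. Your additional care in checking that $f^{\natural}(mK_X)$ is Cartier (via the reflexivity identity $\mathcal{O}_Y(-f^{\natural}(mK_X))=(\mathcal{O}_X(-mK_X)\cdot\mathcal{O}_Y)^{\vee\vee}$) and that $K_{m,Y/X}$ is $\mathbb{Q}$-Cartier, so that $g^{*}K_{m,Y/X}$ is defined, is a faithful unpacking of the intended proof.
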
 

In particular, when study the singularities of a given variety $X$, it suffices to find a log resolution of $(X, \mathcal{O}_X(-mK_X))$.
More precisely, we just need to find a proper birational morphism $f:Y\rightarrow X$ such that $Y$ is smooth and $\mathcal{O}_X(-mK_X)\cdot \mathcal{O}_Y$ is the invertible sheaf of a divisor $F$, and the exceptional locus is a divisor $E$ such that $F \cup E$ is simple normal crossing.
The reason is that for any divisor $E$ over $X$, by taking a common resolution, we can assume $E$ is 
on a higher resolution $g:V\rightarrow Y$.
Then since $Y$ is smooth, the order of $K_{m, V/Y}$ over $E$ must be nonnegative, hence will not affect 
the type of the singularities of $X$.

Moreover,  for any $m \ge 2$, we can find an $m$-$compatible$ boundary $\Delta$ such that
$K_X+\Delta$ is $\mathbb{Q}$-Cartier and
\begin{equation}
K_{Y/X}^{\Delta}:=K_Y +\Delta _Y -f^*(K_X +\Delta) =K_{m,Y/X} 
\end{equation}
See \cite{T} for the definition of m-compatible boundary.
We will give a proof of a similar result in Lemma \ref{boundary}.

So if $X$ is log terminal (resp, log canonical) in the new notion, there is a boundary $\Delta$ (m-compatible for some positive integer m) such that $(X,\Delta)$ is Kawamata log terminal (resp, log canonical) in the classical sense.
On the other hand, for any boundary $\Delta$ such that $m(K_X+\Delta)$ is Cartier, we have $K_{Y/X}^{\Delta}\le K_{m,Y/X}$ (Remark 3.9 in \cite{T}).
As a result, we conclude that
\begin{prop}(Proposition 7.2 in \cite{T})
$X$ is log canonical (resp. log terminal) if and only if there is a boundary $\Delta$ such that $(X,\Delta)$ 
is a log canonical pair (resp. log terminal pair).
\end{prop}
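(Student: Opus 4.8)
The plan is to prove the two implications separately, in each case by comparing the limiting discrepancies $a_{m,F}(X)$ of \cite{T} with the classical discrepancies $a_F(X,\Delta)$ of a pair. The two inputs are the $m$-compatible boundary construction, which for a suitable $m$ produces a boundary $\Delta$ with $K_X+\Delta$ $\mathbb{Q}$-Cartier and $K^{\Delta}_{Y/X}=K_{m,Y/X}$ (recalled in equation~(1) above and proved in Lemma~\ref{boundary}), and the inequality $K^{\Delta}_{Y/X}\le K_{m,Y/X}$ valid for any boundary with $m(K_X+\Delta)$ Cartier (Remark~3.9 of \cite{T}).

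The converse direction is the easier one. Suppose $(X,\Delta)$ is a log canonical (resp. log terminal) pair, and choose $m$ so that $m(K_X+\Delta)$ is Cartier. For any birational morphism from a smooth model $V$ on which $\mathcal{O}_X(-mK_X)\cdot\mathcal{O}_V$ is invertible, Remark~3.9 of \cite{T} gives $K^{\Delta}_{V/X}\le K_{m,V/X}$. Taking the order along any prime divisor $F$ over $X$ yields $a_{m,F}(X)\ge a_F(X,\Delta)$. Since $(X,\Delta)$ being a log canonical (resp. log terminal) pair means precisely that $a_F(X,\Delta)\ge -1$ (resp. $>-1$) for every $F$ over $X$, we obtain $a_{m,F}(X)\ge -1$ (resp. $>-1$) for all such $F$, so $X$ is log canonical (resp. log terminal) in the new sense.

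For the forward direction, suppose $X$ is log canonical (resp. log terminal), so that $a_{m_0,F}(X)\ge -1$ (resp. $>-1$) for all $F$ over $X$ and some $m_0$; after passing to a multiple (using the monotonicity $a_{m,F}(X)\le a_{mk,F}(X)$ of the limiting discrepancies) I may assume $m_0\ge 2$. I would then take $Y\to X$ a log resolution of $\mathcal{O}_X(-m_0K_X)$ together with an $m_0$-compatible boundary $\Delta$, so that equation~(1) gives $K^{\Delta}_{Y/X}=K_{m_0,Y/X}$, whence $a_F(X,\Delta)=a_{m_0,F}(X)$ for every prime divisor $F$ appearing on $Y$. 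The main obstacle is that this identity is only an identity on the fixed model $Y$: on a higher model $V\to Y$ the strict transform of $\Delta$ forces $K^{\Delta}_{V/X}\le K_{m_0,V/X}$ to be strict in general, so the equality of discrepancies cannot be propagated naively to all divisors over $X$. The way around this is that $\Delta$ is arranged so that $Y$ is simultaneously a log resolution of the pair $(X,\Delta)$; since log canonicity and log terminality of a pair may be tested on a single log resolution, it suffices to control the discrepancies of divisors on $Y$ together with the coefficients of $\Delta$. The former are handled by $a_F(X,\Delta)=a_{m_0,F}(X)\ge -1$ (resp. $>-1$), and the latter — that the coefficients of $\Delta$ are at most $1$ (resp. strictly less than $1$) — is again an instance of the bound $a_{m_0,F}(X)\ge -1$ (resp. $>-1$) applied to the components of $\Delta$. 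Combining these shows $(X,\Delta)$ is a log canonical (resp. log terminal) pair, which is exactly where the $m_0$-compatibility of $\Delta$ is essential.
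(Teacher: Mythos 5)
Your two-implication structure is exactly the paper's: the paper derives this proposition (quoting Proposition~7.2 of \cite{T}) from the same two inputs you use, namely an $m$-compatible boundary satisfying equation~(1) for the forward direction, and the inequality $K^{\Delta}_{Y/X}\le K_{m,Y/X}$ of Remark~3.9 in \cite{T} for the converse. Your converse direction is correct as written, and your forward direction is organized correctly --- you rightly observe that equation~(1) is an identity only on the fixed model $Y$, so one must invoke the fact that a pair with $\mathbb{Q}$-Cartier $K_X+\Delta$ can be tested on a single log resolution, which $m$-compatibility provides.

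But one step in the forward direction is genuinely wrong: the assertion that the bound on the coefficients of $\Delta$ is ``again an instance of $a_{m_0,F}(X)\ge-1$ applied to the components of $\Delta$.'' With the normalization $f_*K_Y=K_X$ implicit in the definition of $K_{m,Y/X}$, every prime divisor $F$ lying on $X$ itself has $a_{m,F}(X)=0$: at the generic point of $F$ the variety $X$ is regular and $f$ is an isomorphism, so along the strict transform of $F$ both $K_Y$ and $\frac{1}{m}f^{\natural}(mK_X)$ have order $\mathrm{ord}_F(K_X)$. Hence the bound, applied to a component of $\Delta$, reads $0\ge-1$ and says nothing about its coefficient. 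The same confusion underlies your claim that equation~(1) forces $a_F(X,\Delta)=a_{m_0,F}(X)$ for \emph{every} prime divisor $F$ on $Y$: since $K^{\Delta}_{Y/X}=K_Y+f^{-1}_*\Delta-f^*(K_X+\Delta)$, its order along the strict transform of a component $D$ of $\Delta$ is $a_D(X,\Delta)+\mathrm{coeff}_D(\Delta)$ rather than the discrepancy, so the equality of discrepancies holds only for $f$-exceptional divisors and for divisors not in $\mathrm{Supp}\,\Delta$. The coefficient bound must instead be taken from the definition of an $m$-compatible boundary itself, which requires $\lfloor\Delta\rfloor=0$; concretely, in the construction of Lemma~\ref{boundary} the boundary is $\frac{1}{m}M$ with $M$ a reduced general member of a free linear system, so all of its coefficients equal $\frac{1}{m}<1$. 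With that replacement your argument closes and agrees with the paper's.
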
 

\begin{cor}
Let $X$ be a normal variety with log terminal singularities, then $X$ has rational singularities.
\end{cor}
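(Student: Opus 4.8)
The plan is to bypass the new notion of singularity entirely and reduce to the classical theory, where the statement is standard. The key input is the proposition stated immediately above (Proposition 7.2 of \cite{T}): since $X$ is assumed to be log terminal in the sense of de Fernex and Hacon, there is an effective boundary $\Delta$ for which $(X,\Delta)$ is a log terminal pair in the classical (Kawamata) sense. In particular $K_X+\Delta$ is $\mathbb{Q}$-Cartier and every discrepancy of $(X,\Delta)$ is strictly greater than $-1$. Thus the problem is transported from the de Fernex--Hacon framework into the usual MMP setting, where I can invoke the well-known fact that Kawamata log terminal pairs have rational singularities (see, for instance, Koll\'ar and Mori, \emph{Birational Geometry of Algebraic Varieties}, Theorem 5.22, or Elkik).

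Concretely, I would recall the standard vanishing argument behind that fact. Choose a log resolution $f\colon Y\to X$ of $(X,\Delta)$ and write $K_Y=f^*(K_X+\Delta)+\sum_i a_iE_i$ with each $a_i>-1$. Because the $E_i$ meet in simple normal crossings and the round-up of the discrepancy divisor is effective, relative Kawamata--Viehweg vanishing yields $R^if_*\mathcal{O}_Y=0$ for all $i>0$. Combining this with the equality $f_*\mathcal{O}_Y=\mathcal{O}_X$, which holds since $X$ is normal, is exactly the assertion that $X$ has rational singularities. Hence the corollary follows.

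The only point deserving a word of care, and really the sole content of the statement, is that rational singularities is a property of the underlying variety $X$ alone, whereas log terminality here is a property of the \emph{pair} $(X,\Delta)$. This apparent mismatch dissolves once one observes that the vanishing $R^if_*\mathcal{O}_Y=0$ produced above does not reference $\Delta$ in its conclusion: the auxiliary divisor serves only to render $K_X+\Delta$ $\mathbb{Q}$-Cartier and to furnish the discrepancy bound $a_i>-1$ that drives the vanishing theorem. Consequently the rationality statement passes from the pair to $X$ with no further work, and I do not anticipate any genuine obstacle; the corollary is a formal consequence of Proposition 7.2 together with classical vanishing.
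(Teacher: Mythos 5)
Your proof is correct and is exactly the argument the paper intends: the corollary is stated immediately after Proposition 2.4 (Proposition 7.2 of \cite{T}) precisely because it follows by producing a boundary $\Delta$ with $(X,\Delta)$ klt in the classical sense and then invoking the standard fact (Elkik; \cite{KM} Theorem 5.22) that klt pairs have rational singularities. The paper leaves this reduction implicit, so your write-up simply makes explicit the same route.
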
 

\begin{rem}
In \cite{T}, the authors also proposed a notion for being canonical and terminal,
however, there is an example which is canonical but not log terminal, see \cite{SU}. 
\end{rem}

Now we start to give some propositions which are not results in \cite{T}.
We will use these propositions in the next section.
These propositions are interesting by themselves and may be applied to other situations.

\begin{prop}\label{prop1}
Let  $f: X\rightarrow Y$ be a surjective, smooth morphism onto a normal variety $Y$  with log canonical (resp. log terminal) singularities, then $X$ is also a normal variety with log canonical (resp. log terminal) singularities.  
\end{prop}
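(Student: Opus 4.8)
The plan is to build a log resolution of $X$ by base changing a log resolution of $Y$, and to reduce the computation of all the discrepancies of $X$ to the known discrepancies of $Y$ through a single identity of limiting relative canonical divisors.

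First I would dispose of normality: a smooth morphism is flat with geometrically regular, hence normal, fibres, and a flat morphism with normal fibres over the normal base $Y$ has normal total space, so $X$ is normal. Now fix the integer $m=m_0$ witnessing that $Y$ is lc (resp. lt), so that $a_{m_0,E}(Y)\ge -1$ (resp. $>-1$) for every prime divisor $E$ over $Y$. Choose a log resolution $g\colon Y'\to Y$ of $(Y,\mathcal O_Y(-m_0K_Y))$, i.e. $Y'$ smooth, $m_0K_{Y'}$ Cartier, $\mathcal O_Y(-m_0K_Y)\cdot\mathcal O_{Y'}$ invertible, and the relevant divisors simple normal crossing. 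Form the fibre product $X':=X\times_Y Y'$, with projections $g'\colon X'\to X$ and $f'\colon X'\to Y'$. Since $f$ is smooth, so is its base change $f'$, whence $X'$ is smooth over the smooth $Y'$ and is itself smooth; and $g'$, being the base change of the proper birational $g$ along the flat surjective $f$, is again proper and birational (an isomorphism over the dense preimage of the locus where $g$ is an isomorphism). Because $f$ is smooth it pulls simple normal crossing divisors back to simple normal crossing divisors, so $g'\colon X'\to X$ is a log resolution of $(X,\mathcal O_X(-m_0K_X))$; in particular $m_0K_{X'}$ is Cartier, and by the Lemma above (Lemma 3.5 in \cite{T}) together with the reduction explained after it, it suffices to check that the coefficients of $K_{m_0,X'/X}$ are $\ge -1$ (resp. $>-1$).

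The heart of the argument is the identity
\begin{equation*}
K_{m,X'/X}=f'^{*}K_{m,Y'/Y}.
\end{equation*}
To obtain it I would use three consequences of smoothness. First, $K_{X/Y}$ is Cartier and $\mathcal O_X(mK_X)\cong\mathcal O_X(mK_{X/Y})\otimes f^{*}\mathcal O_Y(mK_Y)$ as reflexive sheaves, the identification holding in codimension one and extending by reflexivity. Second, the relative dualizing sheaf commutes with base change, so $K_{X'/Y'}=g'^{*}K_{X/Y}$ and hence $K_{X'}=g'^{*}K_{X/Y}+f'^{*}K_{Y'}$. Third, since $\mathcal O_X(-mK_{X/Y})$ is invertible it splits off the natural pullback, and the image sheaf $\mathcal O_Y(-mK_Y)\cdot\mathcal O_{Y'}$ commutes with the flat base change $f'$; reading these off the cartesian square gives
\begin{equation*}
\tfrac1m\,g'^{\natural}(mK_X)=g'^{*}K_{X/Y}+\tfrac1m f'^{*}g^{\natural}(mK_Y).
\end{equation*}
Subtracting this from the expression for $K_{X'}$ cancels the $g'^{*}K_{X/Y}$ terms and yields $K_{m,X'/X}=f'^{*}\bigl(K_{Y'}-\tfrac1m g^{\natural}(mK_Y)\bigr)=f'^{*}K_{m,Y'/Y}$, as claimed.

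Finally I would conclude. Because $f'$ is smooth and surjective, it pulls each prime divisor of $Y'$ back to a reduced divisor of multiplicity one, so every coefficient of $f'^{*}K_{m_0,Y'/Y}$ equals the order of $K_{m_0,Y'/Y}$ along the divisor beneath it, namely some $a_{m_0,E}(Y)\ge -1$ (resp. $>-1$), while any divisor of $X'$ dominating $Y'$ occurs with coefficient $0$. Hence the coefficients of $K_{m_0,X'/X}$ meet the required bound, and by the reduction to a single log resolution recalled above, $X$ is lc (resp. lt). I expect the main obstacle to be the third ingredient of the second paragraph: verifying that the natural (non-homogeneous) pullback $g'^{\natural}$ is compatible with the cartesian square, that is, that the invertible factor $\mathcal O_X(-mK_{X/Y})$ genuinely splits off and that the reflexive image sheaf commutes with the flat base change $f'$. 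Since $f^{\natural}$ is defined through valuations and image ideals rather than as a functorial operation, and is known to fail both homogeneity and composition in general, these two compatibilities have to be checked by hand; by contrast, normality, the base-change behaviour of $K_{X/Y}$, and the final descent of discrepancies are routine.
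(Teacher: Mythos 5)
Your proposal is correct and takes essentially the same route as the paper's proof: base change a log resolution of $Y$ along the smooth morphism, verify that the natural pullback commutes with the flat base change, split off the Cartier relative canonical divisor $K_{X/Y}$ (the paper's $E\sim K_X-f^*K_Y$), deduce the key identity $K_{m,X'/X}=f'^{*}K_{m,Y'/Y}$, and conclude using that smooth pullbacks of reduced divisors are reduced. The two compatibilities you flag as the main obstacles are exactly the ones the paper checks by hand (its equation $\tilde f^{*}(g^{\natural}D)=\tilde g^{\natural}(f^{*}D)$ and the Cartier splitting), so your outline matches the published argument step for step.
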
 

\begin{proof}
Take a log resolution $g:\tilde{Y} \rightarrow Y$ with smooth $\tilde{Y}$ and do base change as shown in the following
diagram. 
Since $f$ is smooth and $Y$ is normal, $X$ is also a normal variety(\cite{AS} Theorem 7.4.9).
Also $\tilde{f}$ is smooth morphism by base change, so we conclude that $\tilde{X}$ is a smooth variety and $\tilde{g}$ is a resolution of $X$.  

\centerline{ \xymatrix{  
 &\tilde{X}  \ar[d]_{\tilde{f}} \ar[r]^{\tilde{g}} 
& X \ar[d]_f	\\ & \tilde{Y} \ar[r]^g	& Y}}	

First note that $f$ and $\tilde{f}$ are flat, so it makes sense to pull back Weil divisor and the pull back is just the preimage of the divisor.
We claim that for any divisor $D$ on $Y$, we have the following commutative property,
\begin{equation}
\tilde{f}^*(g^{\natural}D)= \tilde{g}^{\natural}(f^*D). \label{qqq}
\end{equation}
Observe that $\mathcal{O}_Y(-D)\cdot \mathcal{O}_{\tilde{Y}} \subseteq \mathcal{O}(-g^{\natural}D)$,
and equality holds on codimension one subvarieties.
Since $\tilde{f}$ is smooth, pull back of any reduced divisor is still reduced (\cite{AS} Theorem 7.4.9).
 Ideals $(\mathcal{O}_Y(-D)\cdot \mathcal{O}_{\tilde{Y}})\cdot \mathcal{O}_{\tilde{X}} $ and $( \mathcal{O}(-g^{\natural}D))\cdot \mathcal{O}_{\tilde{X}}$ will determine exactly the same divisor on $\tilde{X}$, 
 which is the pull back of $g^{\natural}D$ by $\tilde{f}$.
 Also by commutativity of the diagram, $\tilde{g}^{\natural}(f^*D)$ is the divisor determined by the ideal 
$(\mathcal{O}_Y(-D)\cdot \mathcal{O}_{\tilde{Y}})\cdot \mathcal{O}_{\tilde{X}} $. 
As a result, we can conclude equation (\ref{qqq}).

For two different canonical divisors $K_{\tilde{Y}}$, $K'_{\tilde{Y}}$ on $\tilde{Y}$, $g_*(K_{\tilde{Y}})$ is linear equivalent to $g_*(K'_{\tilde{Y}})$.
So without loss of generality, we can fix $K_{\tilde{Y}}$ (resp. $K_{\tilde{X}}$) and assume $g_*(K_{\tilde{Y}})=K_Y$ (resp. $\tilde{g}_*(K_{\tilde{X}})=K_X$).
Also fix a divisor $E\sim _{lin}K_X-f^*K_Y$, note that $E$ is Cartier because $f$ is smooth.
Now we claim that 
\begin{equation}
\tilde{g}^{\natural}(mK_X)=m\tilde{g}^*(E)+\tilde{f}^*(g^{\natural}(mK_Y))
\end{equation}
This is because
\begin{eqnarray*}
\tilde{g}^{\natural}(mK_X)&=&\tilde{g}^{\natural}(m \tilde{g}_*K_{\tilde{X}})\\
&=&\tilde{g}^{\natural}(m \tilde{g}_*(K_{\tilde{X}}-\tilde{f}^*K_{\tilde{Y}}+\tilde{f}^*K_{\tilde{Y}}) ) \\
&=&\tilde{g}^{\natural}(m \tilde{g}_*(\tilde{g}^*(E)+\tilde{f}^*K_{\tilde{Y}}) )     \hspace{ 37pt } \mbox{ (by base change of dualizing sheaf) }\\
&=&m\tilde{g}^*(E)+ \tilde{g}^{\natural}(m\tilde{g}_*(\tilde{f}^*K_{\tilde{Y}}))  \hspace{ 30pt }   \mbox{(since $E$ is Cartier)}\\
&=&m\tilde{g}^*(E)+ \tilde{g}^{\natural}(mf^*(g_*K_{\tilde{Y}})) \hspace{ 30pt }   \mbox{(\cite{Fu} Proposition 1.7 in chapter one)}\\
&=&m\tilde{g}^*(E)+ \tilde{g}^{\natural}(mf^*(K_Y))\\
&=&m\tilde{g}^*(E)+\tilde{f}^*(g^{\natural}(mK_Y))\hspace{ 37pt } \mbox{ (by equation(\ref{qqq})) }
\end{eqnarray*}

Then we can calculate $K_{m, \tilde{X}/X}$ as following,
\begin{eqnarray*}
K_{m, \tilde{X}/X}&:=& K_{\tilde{X}}-\frac{1}{m}\cdot  \tilde{g}^{\natural} (mK_X)\\
&=&K_{\tilde{X}}-\frac{1}{m} \cdot(m\tilde{g}^*(E)+\tilde{f}^*(g^{\natural}(mK_Y)))\\
&=&K_{\tilde{X}}-\frac{1}{m} \cdot(m(K_{\tilde{X}}-\tilde{f}^*K_{\tilde{Y}})+\tilde{f}^*(g^{\natural}(mK_Y)))\\
&=&\tilde{f}^*(K_{\tilde{Y}}-\frac{1}{m}  f^{\natural} (mK_Y))=\tilde{f}^*(K_{m, \tilde{Y}/Y}) 
\end{eqnarray*}

So If we write $K_{m, \tilde{Y}/Y}= \sum a_i E_i$, where $E_i$'s are simple normal crossing exceptional 
divisors of $g$.
Then $K_{m, \tilde{X}/X}= \sum a_i E_i'$, where $E_i'$'s are the preimages of $E_i$'s.
Note that $E_i'$'s are also reduced since $\tilde{f}$ is smooth.
As a result, we conclude that $X$ is lc (resp. lt) is $Y$ is lc (resp. lt).

\end{proof}

\begin{prop} (Bertini)\label{prop2}
Let $X$ be a normal scheme with  lc (resp. lt) singularities.
Let $f$ be a morphism from $X$ to a projective variety $Y$. 
Then for a general point in $Y$, the fiber has lc (resp. lt) singularities.
\end{prop}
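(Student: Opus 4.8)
The plan is to reduce the statement to a single, well-chosen log resolution of $X$ and then restrict everything to a general fiber, mimicking the commutativity argument used in Proposition \ref{prop1}. First I would replace $Y$ by the closure of the image of $f$, so that $f$ becomes dominant, and fix an integer $m_0$ witnessing the lc (resp. lt) condition for $X$; that is, $a_{m_0,F}(X)\ge -1$ (resp. $>-1$) for all $F$. Then choose a log resolution $\pi:\tilde X\to X$ of the pair $(X,\mathcal{O}_X(-m_0K_X))$ with $\tilde X$ smooth, $\mathcal{O}_X(-m_0K_X)\cdot\mathcal{O}_{\tilde X}$ invertible, and exceptional locus a simple normal crossing divisor. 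Writing $K_{m_0,\tilde X/X}=\sum a_iE_i$, the reduction explained after the cited Lemma 3.5 of \cite{T} shows that these numbers $a_i$ already determine whether $X$ is lc (resp. lt), so it suffices to track them under restriction.

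Next I would invoke generic smoothness (we work over $\mathbb{C}$) for the composite $h=f\circ\pi:\tilde X\to Y$: there is a dense open $U\subseteq Y$ over which $h$ is smooth, so every fiber $\tilde X_y=h^{-1}(y)$ is smooth for $y\in U$. Shrinking $U$, I would also arrange that the general fiber $X_y=f^{-1}(y)$ is normal and that $\pi_y:=\pi|_{\tilde X_y}:\tilde X_y\to X_y$ is a proper birational morphism, hence a log resolution of $X_y$, with the divisors $E_i\cap\tilde X_y$ reduced and simple normal crossing. Because $\tilde X_y$ and $X_y$ are fibers over a smooth point of $Y$, adjunction gives $K_{\tilde X_y}=K_{\tilde X}|_{\tilde X_y}$ and $K_{X_y}=K_X|_{X_y}$, the normal bundles being trivial.

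The heart of the argument is the restriction compatibility
\begin{equation*}
\bigl(\pi^{\natural}(m_0K_X)\bigr)\big|_{\tilde X_y}=\pi_y^{\natural}(m_0K_{X_y}),
\end{equation*}
the exact analog of equation (\ref{qqq}). I would prove it as there: since $h$ is smooth, the restriction to $\tilde X_y$ of a reduced divisor stays reduced, and the invertible sheaf $\mathcal{O}_X(-m_0K_X)\cdot\mathcal{O}_{\tilde X}$ restricts to $\mathcal{O}_{X_y}(-m_0K_{X_y})\cdot\mathcal{O}_{\tilde X_y}$ (using $K_X|_{X_y}=K_{X_y}$), so the two ideal sheaves cut out the same divisor on $\tilde X_y$. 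Combining this with the two adjunction identities yields
\begin{equation*}
K_{m_0,\tilde X_y/X_y}=K_{\tilde X_y}-\tfrac{1}{m_0}\pi_y^{\natural}(m_0K_{X_y})=\Bigl(K_{\tilde X}-\tfrac{1}{m_0}\pi^{\natural}(m_0K_X)\Bigr)\Big|_{\tilde X_y}=K_{m_0,\tilde X/X}\big|_{\tilde X_y}=\sum a_i\,(E_i|_{\tilde X_y}).
\end{equation*}
Since the $E_i|_{\tilde X_y}$ are reduced and the coefficients $a_i$ are unchanged, the $m_0$-th limiting discrepancies of $X_y$ computed on the log resolution $\pi_y$ are $\ge -1$ (resp. $>-1$), and the smoothness of $\tilde X_y$ guarantees that passing to any higher resolution only adds nonnegative contributions. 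Hence $X_y$ is lc (resp. lt) for every $y\in U$.

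I expect the main obstacle to be the set-up rather than the discrepancy bookkeeping: namely, guaranteeing that the general fiber $X_y$ is genuinely normal and that $\pi_y$ is a log resolution with the divisors $E_i|_{\tilde X_y}$ remaining reduced and simple normal crossing. This rests on generic-smoothness and generic-normality statements (openness of the normal locus over $\mathbb{C}$ together with generic flatness), and on checking that both the invertibility of $\mathcal{O}_X(-m_0K_X)\cdot\mathcal{O}_{\tilde X}$ and the adjunction $K_X|_{X_y}=K_{X_y}$ survive restriction to a general fiber; once these are in place, the compatibility of the natural pull-back with restriction is precisely equation (\ref{qqq}), and the conclusion is immediate.
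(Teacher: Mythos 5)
Your route is genuinely different from the paper's, but it has a gap at the step you yourself call the heart of the argument. The paper never manipulates natural pull-backs at all: it uses Proposition 2.4 to choose a boundary $\Delta$ so that $(X,\Delta)$ is an lc (resp.\ lt) pair, applies Koll\'ar's Bertini theorem for pairs (\cite{KO}, Proposition 7.7) to a general member of $|f^*A|$ with $A$ very ample on $Y$ (such a member being a union of fibers when $\dim Y=1$), and inducts on $\dim Y$. That proof is short precisely because all of the delicate behavior of $\mathcal{O}_X(-mK_X)$ under restriction is hidden inside those two cited results; your proof has to confront that behavior directly, and the justification you offer does not suffice.

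Concretely: by functoriality of inverse-image ideals, $\bigl(\mathcal{O}_X(-m_0K_X)\cdot\mathcal{O}_{\tilde X}\bigr)\cdot\mathcal{O}_{\tilde X_y}=\bigl(\mathcal{O}_X(-m_0K_X)\cdot\mathcal{O}_{X_y}\bigr)\cdot\mathcal{O}_{\tilde X_y}$, so your claimed compatibility is the assertion that the ideal $\mathcal{O}_X(-m_0K_X)\cdot\mathcal{O}_{X_y}$ and the divisorial sheaf $\mathcal{O}_{X_y}(-m_0K_{X_y})$ have the same pull-back divisor under $\pi_y$. These two sheaves on $X_y$ agree only away from the locus where $m_0K_X$ fails to be Cartier, which meets $X_y$ in codimension $\ge 2$; the first need not be reflexive, and neither the smoothness of $h$ nor the adjunction $K_X|_{X_y}=K_{X_y}$ controls their difference there. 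Crucially, that difference is then pulled back along $\pi_y$, which is birational and not flat, so a codimension-two discrepancy can become divisorial on $\tilde X_y$. This is exactly where the analogy with equation (\ref{qqq}) breaks down: in Proposition \ref{prop1} the two ideals agreeing in codimension two live on $\tilde Y$ and are pulled back along a \emph{smooth} morphism, which preserves codimension. What comes for free is only the one-sided inequality $\pi_y^{\natural}(m_0K_{X_y})\le (\pi^{\natural}(m_0K_X))|_{\tilde X_y}$, i.e.\ $K_{m_0,\tilde X_y/X_y}\ge \sum a_i\,(E_i|_{\tilde X_y})$, which does point the right way --- but then your closing sentence also fails as stated: ``passing to any higher resolution only adds nonnegative contributions'' rests on Lemma 2.3, which requires $\mathcal{O}_{X_y}(-m_0K_{X_y})\cdot\mathcal{O}_{\tilde X_y}$ itself to be invertible, whereas your argument only gives invertibility of the possibly strictly smaller ideal $\mathcal{O}_X(-m_0K_X)\cdot\mathcal{O}_{\tilde X_y}$. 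To repair the proof you would either have to show that $\mathcal{O}_X(-m_0K_X)\otimes\mathcal{O}_{X_y}$ is reflexive for general $y$ (a depth/Seidenberg-type statement, not a consequence of generic smoothness), or keep only the inequality and redo the discrepancy computation on a log resolution of $\bigl(X_y,\mathcal{O}_{X_y}(-m_0K_{X_y})\bigr)$ dominating $\tilde X_y$, where the inequality together with the standard simple-normal-crossings computation for coefficients $\ge -1$ (resp.\ $>-1$) closes the argument.
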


\begin{proof}
First we assume that $dim Y=1$. 
Take a very ample divisor $A$ on $Y$, consider the base point free linear system $|f^*A|$ on $X$. 
Note that $f^*A$ is an union of fibers and we claim that they have the same singularities as $X$ does.

By the result of Proposition 2.4, there is a $\mathbb{Q}$-Cartier divisor $\Delta$ such that 
$(X, \Delta)$ is a {\it{lc}} (resp. {\it {lt}}) pair.
Let $B$ be a general member in $|f^*A|$.
Then by  \cite{KO} Proposition 7.7,
\begin{equation*}
\mathrm{discrep}(B, \Delta |_B) \ge \mathrm{discrep}(X, \Delta). 
\end{equation*}
As a result, $B$ is {\it{lc}} (resp. {\it {lt}}) if $X$ is {\it{lc}} (resp. {\it {lt}}). 
Since $B$ is an union of fibers, we get the conclusion in this case.

For higher dimension $Y$, we do the same procedure as above.
Then replace $X$ by $B$, $Y$ by $f(B)$ with reduced scheme structure.
Note that $f(B)$ has dimension at most equal to $dimB$, and $B$ has codimension one in $X$.
So in each step, the dimension of $Y$ will drop by at least one.
Then by induction, the conclusion follows.

\end{proof} 

The following lemma, which will be used in the proof of the next proposition, is a slight modification of Theorem 5.4 in \cite{T}.
We include the proof with some modifications for the readers' convenience.  

\begin{lem} \label{boundary}
Let $X$ be a normal variety and $S$ a Cartier divisor on it.
Then for any $m\ge 2$, we can find a boundary $\Delta$ on $X$ such that $\Delta |_S$ is 
a m-compatible boundary of $S$.
\end{lem}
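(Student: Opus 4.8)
The plan is to mimic the construction of an $m$-compatible boundary in \cite{T} (Theorem 5.4), but to choose the auxiliary divisor on $X$ so that its \emph{restriction} to $S$ produces the desired $m$-compatible boundary on $S$. The two inputs that make this work are adjunction for the Cartier divisor $S$ and the surjectivity of restriction of global sections for sufficiently positive twists. Since the notion of $m$-compatible boundary presupposes that $S$ is normal, I work on the normal variety $S$ and record the adjunction identity first: because $S$ is Cartier, $\mathcal{O}_X(mS)$ is invertible and, up to codimension two on $S$, one has $mK_S = (mK_X+mS)|_S$, so that for any Cartier divisor $H$ on $X$
\[
(-mK_X - mS + H)|_S \sim -mK_S + H|_S .
\]
This is exactly where I use that $S$ is Cartier: twisting the reflexive sheaf $\mathcal{O}_X(-mK_X)$ by the \emph{invertible} sheaf $\mathcal{O}_X(-mS)$ is what lets me pull it back to $\mathcal{O}_S(-mK_S)$ along $S$.

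Next I would fix an ample $H$ on $X$, chosen large enough that three conditions hold simultaneously. (a) The reflexive sheaf $\mathcal{F} := \mathcal{O}_X(-mK_X)\otimes\mathcal{O}_X(H-mS)$ is globally generated (Serre). (b) The restriction map $H^0(X,\mathcal{F}) \to H^0(S, \mathcal{O}_S(-mK_S + H|_S))$ is surjective onto the sections carrying codimension-one data; this follows from Serre vanishing $H^1(X, \mathcal{F}\otimes\mathcal{O}_X(-S)) = 0$ applied to the sequence $0 \to \mathcal{F}\otimes\mathcal{O}_X(-S) \to \mathcal{F} \to \mathcal{F}\otimes\mathcal{O}_S \to 0$ (exact since $\mathcal{F}$ is torsion-free and $S$ is Cartier), together with the identification of $\mathcal{F}\otimes\mathcal{O}_S$ with $\mathcal{O}_S(-mK_S + H|_S)$ in codimension one. (c) The sheaf $\mathcal{O}_S(-mK_S)\otimes\mathcal{O}_S(H|_S)$ is globally generated on $S$, so that the construction of \cite{T} (Theorem 5.4) applies verbatim on $S$.

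Then I would take a general member $D \in |{-mK_X - mS + H}|$ and set $\Delta := \tfrac{1}{m}D$. For general $D$ the divisor is reduced, so $\Delta$ is a genuine boundary on $X$, and $m(K_X+\Delta) = mK_X + D \sim H - mS$ is Cartier. By (b) and a Bertini argument, $D_S := D|_S$ is a general member of $|{-mK_S + H|_S}|$ on $S$; by (c) this is precisely the type of general member whose normalized divisor $\tfrac{1}{m}D_S$ is an $m$-compatible boundary on $S$ in the sense of \cite{T}, realizing $K_{Y/S}^{\Delta|_S} = K_{m,Y/S}$ on a resolution $Y \to S$. Since $\Delta|_S = \tfrac{1}{m}D|_S = \tfrac{1}{m}D_S$, this $\Delta$ is the boundary we want.

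The main obstacle I anticipate is step (b). Because $\mathcal{O}_X(-mK_X)$ is only reflexive and $S$ is a \emph{fixed} Cartier divisor that may meet the singular locus of $X$, the restriction $\mathcal{F}\otimes\mathcal{O}_S$ can fail to be reflexive and may disagree with $\mathcal{O}_S(-mK_S + H|_S)$ along a codimension-two locus of $S$. I would handle this by observing that $m$-compatibility is a statement about valuations of divisors, hence insensitive to codimension-two modifications: it therefore suffices that the two linear systems agree in codimension one and that a general section of $\mathcal{F}$ restricts to a general member of the system on $S$. Verifying that the natural pullback commutes with restriction to $S$ in this sense—so that $\tfrac{1}{m}D_S$ genuinely computes the limiting relative canonical divisor $K_{m,Y/S}$—is the technical heart of the argument, and it is exactly here that the hypotheses "$S$ Cartier" and "$D$ general" are used together.
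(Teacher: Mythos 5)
Your skeleton---take a general member of a sufficiently positive twist of $\mathcal{O}_X(-mK_X)$, restrict it to $S$, and verify $m$-compatibility by the computation of Theorem 5.4 in \cite{T}---is the same as the paper's, but the two places where you deviate from the paper are exactly where your argument has gaps. First, your step (b) rests on Serre vanishing $H^1(X,\mathcal{F}\otimes\mathcal{O}_X(-S))=0$, which requires $X$ to be proper. The Lemma is stated for an arbitrary normal variety, and in the paper it is applied (through Proposition \ref{prop4}, inside the proof of Theorem 1.1) to varieties such as $(G\times Y)\times_X Z$ that are not proper. The paper's proof is organized precisely to avoid any surjectivity-of-restriction statement: it uses only global generation, of $L\otimes\mathcal{O}_X(-mD)$ on $X$ and of $L|_S\otimes\mathcal{O}_S(-mD_S)$ on $S$, which is available quasi-projectively. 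Second, your adjunction identity $mK_S=(mK_X+mS)|_S$ in codimension one is asserted rather than proved; ``twisting a reflexive sheaf by an invertible one'' does not address the real issue, namely whether restriction to $S$ commutes with taking reflexive powers along $S\cap\mathrm{Sing}(X)$. The paper never needs any such power adjunction: it chooses an effective $D$ with $K_X-D$ Cartier and applies adjunction only to the honest Cartier divisor $K_X+S-D$.

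The more serious gap is that, even granting properness, your surjectivity lands in the wrong space. The restriction sequence gives $H^0(X,\mathcal{F})\twoheadrightarrow H^0(S,\mathcal{F}\otimes\mathcal{O}_S)$, but $\mathcal{F}\otimes\mathcal{O}_S$ is in general a strict subsheaf of its reflexive hull $\mathcal{O}_S(-mK_S+H|_S)$ along the codimension-two locus $S\cap\mathrm{Sing}(X)$, and the induced map $H^0(S,\mathcal{F}\otimes\mathcal{O}_S)\to H^0(S,\mathcal{O}_S(-mK_S+H|_S))$ need not be surjective. Hence $D|_S$ is a general member only of the image subsystem, and your claim that it is a general member of $|-mK_S+H|_S|$, so that Theorem 5.4 of \cite{T} applies ``verbatim'' on $S$, is unjustified. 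Your fallback---that $m$-compatibility is insensitive to codimension-two modifications---is backwards: the natural pullback $\tilde{f}^{\natural}(mK_S)=\mathrm{div}(\mathcal{O}_S(-mK_S)\cdot\mathcal{O}_{\tilde{S}})$ is precisely the object in this theory that depends on the codimension-$\geq 2$ structure of the ideal sheaf. Concretely, on a log resolution of $(S,\mathcal{O}_S(-mK_S))$ the pullback of your restricted subsystem can acquire a forced fixed part along exceptional divisors lying over $S\cap\mathrm{Sing}(X)$, in which case one only obtains $K^{\Delta_S}_{\tilde{S}/S}\le K_{m,\tilde{S}/S}$---an inequality that holds for any boundary by Remark 3.9 of \cite{T}---rather than the equality defining $m$-compatibility. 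You yourself flag this verification as ``the technical heart'' and leave it open; it is the actual content of the Lemma. The paper's proof is built so that this point is checkable: the fixed part is defined intrinsically on $S$ as $E_S=\tilde{f}^{\natural}(mD_S)$, the boundary is $\frac{1}{m}M|_S$ for $M$ a general member of the globally generated $L\otimes\mathcal{O}_X(-mD)$, and the key identity $\tilde{f}^*G_S=\tilde{f}^{-1}_*M_S+E_S$ is extracted from genericity of $M$, with no appeal to surjectivity of restriction or to power adjunction.
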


\begin{proof}
Take an effective divisor $D$ such that $K_x-D$ is Cartier, 
by adjunction formula, $(K_x +S-D)|_S=K_S-D_S$ is also Cartier.
Let $f:\tilde{X}\rightarrow X$ be a log resolution of $((X,S),\mathcal{O}(-mK_X)    )$,
and we assume  it is high enough that $f$ restricts to a resolution $\tilde{f}:\tilde{S}\rightarrow S$ as shown by the following diagram,

\centerline{ \xymatrix{  
 &\tilde{S}  \ar[d]_{\tilde{f}} \ar[r] 
& \tilde{X} \ar[d]^f	\\ & S \ar[r]	& X}}	
 
 Denote $\tilde{f}^{\natural}(mD_S)$ as $E_S$. 
 Since $K_S-D_S$ is Cartier, by Lemma 2.4 in \cite{T} and definition of natural pull back, we have
 \begin{eqnarray*}
 \tilde{f}^{\natural}(mK_S)
&=& \tilde{f}^{\natural}(m(K_S-D_S)+mD_S)\\
&=&\tilde{f}^*(m(K_S-D_S))+E_S
 \end{eqnarray*}
 As a result, we can write
 \begin{equation*}
 K_{m,  \tilde{S}/S}=K_{\tilde{S}}- \tilde{f}^*(K_S-D_S)-\frac{1}{m}E_S
 \end{equation*}
 
 On the other hand, 
 take a line bundle $L$ on $X$ positive enough such that  both $L\otimes \mathcal{O}_X(-mD)$ and its restriction to $S$, $L|_S\otimes \mathcal{O}_S(-mD_S)$ are globally generated.
 Let $M$ be general section of $L\otimes \mathcal{O}_X(-mD)$ such that,
 as an effective divisor, $M $ is reduced and has no common component with $D$ and $S$.
Define $\Delta _S$  as $\frac{1}{m} M|_S$.
Let $G=M+mD$ and denote its restriction to $S$ as $G_S:=M_S+mD_S$, 
then $K_S+\Delta_S=K_S-D_S+\frac{1}{m}G_S$ is $\mathbb{Q}$-Cartier. (Note that $G_S$ is Cartier.) 
We also have the following equalities:
 \begin{eqnarray*}
\tilde{f}^*G_S&=& \tilde{f}^{\natural}G_S  \hspace{ 60pt } \mbox{ (because $G_S$ is Cartier)}\\
&=&  \tilde{f}^{\natural}(M_S+mD_S)\\
&=& \tilde{f}^{-1}_*M_S+E_S\\
&=&m\tilde{f}^{-1}_*\Delta_S+E_S \hspace{ 12pt } \mbox{ (by definition)}
 \end{eqnarray*}
The last equation comes from the fact that we chose $M$ general, 
so we can assume $M$ does not pass through any locus we blow up.

Then we have
 \begin{eqnarray*}
 K_{\tilde{S}/S}^{\Delta _S}&:=& K_{\tilde{S}} + \tilde{f}^{-1}_*{\Delta _S}-\tilde{f}^*(K_S+\Delta_S)\\
 &=& K_{\tilde{S}} + \tilde{f}^{-1}_*{\Delta _S}-\tilde{f}^*(K_S+\Delta_S -\frac{1}{m}G_S)- \frac{1}{m}\tilde{f}^*G_S\\
 &=&K_{\tilde{S}}- \tilde{f}^*(K_S-D_S)-\frac{1}{m}E_S
 \end{eqnarray*}
 
 Hence $K_{m,  \tilde{S}/S}= K_{\tilde{S}/S}^{\Delta _S}$, i.e, 
 $\Delta_S=\Delta |_S$ is a m-compatible boundary of $S$.

\end{proof}

\begin{prop}\label{prop4}
Let $f: X\rightarrow Z$ be a morphism from a normal variety $X$ to a smooth variety $Z$. If every fiber of $f$is log canonical (resp. log terminal), then $X$ is log canonical (resp. log terminal).
\end{prop}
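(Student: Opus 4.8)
The plan is to prove the statement by inversion of adjunction, cutting $X$ down to a single fiber one Cartier divisor at a time. Since being lc (resp. lt) is local on $X$, it suffices to fix a closed point $x\in X$ and show that $X$ is lc (resp. lt) in a neighborhood of the fiber $X_z=f^{-1}(z)$, where $z=f(x)$. As $Z$ is smooth of dimension $n$, I would choose near $z$ a flag of smooth subvarieties $\{z\}=Z_0\subset Z_1\subset\cdots\subset Z_n=Z$ with $\dim Z_j=j$ and each $Z_{j-1}$ a smooth Cartier divisor in $Z_j$ (for instance the coordinate subspaces $Z_j=V(t_{j+1},\dots,t_n)$ for regular parameters $t_1,\dots,t_n$ at $z$). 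Pulling back, set $S_j=f^{-1}(Z_j)$, so that $S_0=X_z$, $S_n=X$, and each $S_{j-1}$ is cut out in $S_j$ by the single equation $f^{*}t_j$; after shrinking so that $f$ is equidimensional near $X_z$, each $S_{j-1}$ is an effective Cartier divisor on $S_j$.

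Next I would check that every $S_j$ is normal near $X_z$. The fiber $S_0=X_z$ is lc (resp. lt), hence normal. I would then use the elementary fact that a normal effective Cartier divisor forces its ambient variety to be normal along it: cutting by the nonzerodivisor $f^{*}t_j$ drops depth by one, so the $S_2$ property passes from $S_{j-1}$ up to $S_j$, while $R_1$ passes up because a codimension-one component of $\mathrm{Sing}(S_j)$ meeting $S_{j-1}$ would make $S_{j-1}$ singular in codimension one. An ascending induction then gives that each $S_j$, and in particular $X=S_n$, is normal in a neighborhood of $X_z$; this is what lets me dispense with Bertini or genericity arguments and work with the fixed equations $f^{*}t_j$.

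The heart of the argument is the inductive step: assuming $S_{j-1}$ is lc (resp. lt) near $X_z$, deduce the same for $S_j$. Since $S_{j-1}$ is lc (resp. lt), its definition provides a single integer $m\ge 2$ with $a_{m,F}(S_{j-1})\ge -1$ (resp. $>-1$) for every prime divisor $F$ over $S_{j-1}$. Applying Lemma \ref{boundary} to $(S_j,S_{j-1})$ yields a boundary $\Delta$ on $S_j$ with $m(K_{S_j}+\Delta)$ Cartier and with $\Delta|_{S_{j-1}}$ an $m$-compatible boundary of $S_{j-1}$; by the defining property of $m$-compatibility the discrepancies of $(S_{j-1},\Delta|_{S_{j-1}})$ are exactly the $a_{m,F}(S_{j-1})$, so $(S_{j-1},\Delta|_{S_{j-1}})$ is an lc (resp. lt) pair. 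Choosing $\Delta$ so that its support does not contain $S_{j-1}$ (as in the proof of Lemma \ref{boundary}), adjunction gives $(K_{S_j}+\Delta+S_{j-1})|_{S_{j-1}}=K_{S_{j-1}}+\Delta|_{S_{j-1}}$, and the classical inversion of adjunction shows $(S_j,\Delta+S_{j-1})$ is log canonical (resp. purely log terminal) near $S_{j-1}$; subtracting the coefficient-one component $S_{j-1}$ only raises discrepancies, so $(S_j,\Delta)$ is an lc (resp. lt) pair near $S_{j-1}\supseteq X_z$. Finally, since $m(K_{S_j}+\Delta)$ is Cartier, Remark 3.9 of \cite{T} gives $K^{\Delta}_{V/S_j}\le K_{m,V/S_j}$ for every resolution $V$, whence $a_{m,F}(S_j)\ge \mathrm{ord}_F(K^{\Delta}_{V/S_j})\ge -1$ (resp. $>-1$). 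Thus $S_j$ is lc (resp. lt) near $X_z$, and ascending induction from $j=0$ to $j=n$ shows $X$ is lc (resp. lt) near $X_z$, hence at $x$.

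The step I expect to be the main obstacle is this inductive passage through the classical theory: one must convert the generalized lc/lt condition on the divisor $S_{j-1}$ into an honest lc/lt pair via Lemma \ref{boundary}, feed it into inversion of adjunction for pairs, and convert back to the generalized condition on $S_j$ through the inequality $K^{\Delta}_{V/S_j}\le K_{m,V/S_j}$. Two points need care: selecting one $m$ that simultaneously witnesses the lc/lt property of $S_{j-1}$ for all $F$ (which is exactly what the definition supplies) and matching it with the $m$ used in Lemma \ref{boundary}; and guaranteeing that the successive pullbacks $S_j$ really are normal Cartier divisors, which I handle via the fiber's normality in the equidimensional situation and which in general is arranged by first flattening or stratifying $f$ near $X_z$. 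By comparison, the normality propagation and the reduction to a neighborhood of one fiber are routine.
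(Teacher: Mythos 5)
Your proposal is correct and follows essentially the same route as the paper's own proof: induct on $\dim Z$ by cutting with pullbacks of local coordinates at $z$, use Lemma \ref{boundary} to produce an $m$-compatible boundary on the divisor, apply classical inversion of adjunction (\cite{KM} 5.50), and convert back to the generalized notion (you via Remark 3.9 of \cite{T}, the paper via its Proposition 2.4, which rests on the same inequality $K^{\Delta}_{V/X}\le K_{m,V/X}$). If anything you are more careful than the paper, whose terse induction silently assumes what your normality-propagation step actually proves; the one point neither treatment settles is that the successive preimages $S_j$ really are Cartier in one another (your equidimensionality proviso), which can genuinely fail and cannot always be arranged by shrinking --- e.g. for $\mathrm{Bl}_p\mathbb{C}^2\to\mathbb{C}^2$ near the exceptional fiber --- though it does hold in the flat setting in which the paper applies this proposition.
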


\begin{proof}
Given any point $x \in X$, let $z\in Z$ such that $x$ belongs to  the fiber of $z$. 
Assume dim$Z=1$, $z$ can be thought of as a Catier divisor and so is  $f^*z$.
Denote the Weil divisor corresponding to $f^*z$ as $S$, then by  lemma \ref{boundary},
 we can find a m-compatible boundary $\Delta_S$ such that $\Delta_S=\Delta |_S$ for some $\Delta$ on $X$. 
 By assumption, $(S, \Delta_S)$ is a lc (resp. klt) pair.
 Using  Inversion of adjunction Theorem (\cite{KM} Theorem 5.50), we conclude that $(X, \Delta +S)$ is a lc (resp. plt) pair in a neighborhood of $x$ .
By Proposition 2.4, this is equivalent to saying that $X$ is lc (resp. lt) near $x$.
 The argument applies to every point of $X$, so the proposition follows in this case.

 If $Z$ has higher dimension, say $n$, since $Z$ is smooth so every close point of $Z$ can be cut out by $n$ local coordinates.
 In particular, these local coordinates are Cartier divisors, so are their pull backs.
 As a result, we can apply Lemma \ref{boundary} and inversion of adjunction to this case.
 Then by induction on dimension of $Z$, the proposition follows.  
 
  \end{proof}

\begin{rem}
See \cite{Niu} for similar results, there Niu assumed some locally complete intersection conditions.
In \cite{BCMP}, a Bertini type theorem for rational singularities is proved.
\end{rem}

\section{Proofs of the Main Theorems}

\begin{proof} (Theorem 1.1)
Consider the morphism $\Phi : G\times Y\rightarrow X$ defined by the action of $G$ on $X$.
Given $x\in X$, the preimage $\Phi ^{-1} (x)$ by definition is
\begin{equation*}
\Phi ^{-1} (x)=\{
(g,y)\in G\times Y | g\cdot y =x
\}
\end{equation*} 
Now we consider the map $P: \Phi ^{-1} (x) \rightarrow Y$ induced by the projection from $G\times Y$ to $Y$ as shown in the following diagram.
Since $G$ acts transitively on X, $P$ is a surjective morphism.

\centerline{ \xymatrix{  
 &\Phi ^{-1}(x)  \ar[d]^P \ar[r]^{\Phi} 
& x \in X \\ & Y }}	

Fix $y\in Y$, we claim that the preimage $P^{-1}(y) \cong \mathrm{stab}_G(y)$.
Since $G$ acts transitively on $X$, so we can find $g'\in G$ such that $g' \cdot x=y$.
Then we have,
\begin{eqnarray*}
P^{-1}(y)&\cong & \{g\in G | g\cdot y =x         \}\\
               &\cong & \{g\in G | g'g\cdot y =y         \}\\
                &\cong & \mathrm{stab}_G(y).
\end{eqnarray*} 
Also note that $Y$ is a subvariety of $X$, and $G$ acts transitively on $Y$.
As a result,  every fiber of $P$ is isomorphic to $\mathrm{stab}_G(y)$ for some fixed $y$. 
By Theorem \Rmnum{3}. 9.9 and \Rmnum{3}.10.2 in  \cite{Har}, we conclude that $P$ is a smooth morphism.
Then by Proposition 2.7, $\Phi ^{-1}(x) $ is {lc} (resp. lt).

Next we consider the following diagram,

\centerline{ \xymatrix{  
 &W  \ar[d]^g \ar[r]^h 
& Z\ar[d]	\\ & G\times Y \ar[r]^{\Phi}	& X}}	
where $W=(G\times Y)\times _XZ$.
It is easy to see that every fiber of $\Phi$ is isomorphic to each other, so $\Phi$ is a flat morphism. 
By base change, $h$ is also flat.
This will imply $W$ is $S_2$ 
Also $Z$ is smooth, so the singular locus of $W$ lies in the fiber,
so $W$ is also $R_1$. 
By Serre condition, $W$ is normal.
So it makes sense to apply the notion of singularities here. 
By Proposition \ref{prop4}, $W$ is lc (resp. lt).  

Composite with $p: G\times Y\rightarrow G$, we have map from $W$ to $G$.
Apply Proposition \ref{prop2}, we can see that general fibers of the map from $W$ to $G$ are lc (resp. lt).
But for any $g\in G$, the fiber is just  $Y^g \times _X Z$, thus the proposition is proved.

\end{proof}

\begin{proof}(Theorem 1.2)
The goal is to prove that for some $m \gg 0$, we have the following isomorphism:
\begin{equation}\label{eq}
(\mathcal{O}_X(-mK_X) \cdot \mathcal{O}_Y )  \cong  \mathcal{O}_Y (-mK_Y).
\end{equation}
  So  the corresponding divisors are exactly the same, or equivalently, $K_{m, Y/X}=0$.
  In particular, $(\mathcal{O}_X(-mK_X) \cdot \mathcal{O}_Y ) $ is an invertible sheaf. 
  If $ f: Y \rightarrow X$ is already a log resolution, then we are done by Lemma 2.3 and the argument following it.
  If not, we do further blowing up. 
  But since $Y$ is smooth, blowing up will just make the relative canonical divisor even more effective. 
  In any case, $X$ is log terminal.  Showing equation (\ref{eq}) is a local problem, so from now on we assume $X$ is affine.

Let $U'$ and $U$ be open sets such that the restriction map $ f|_{U'}: U' \rightarrow U$ is an isomorphism. As a result, for any $m \ge 0$ we have the following equalities.
\begin{eqnarray*}
\Gamma (Y, \mathcal{O}_Y (-mK_Y)) &\cong& \Gamma (U', \mathcal{O}_Y (-mK_Y)|U')\\
& \cong& \Gamma (U,\mathcal{O}_X(-mK_X)|U)\\ 
&\cong& \Gamma (X, \mathcal{O}_X (-mK_X))
\end{eqnarray*}
The first and last equalities come from the fact that the complements of $U$and $U'$ are of codimension at least two. And the middle one comes from the isomorphism.
Since $X$ is affine, $ \mathcal{O}_X (-mK_X)$ is naturally generated by global sections for any $m \ge 0$. 
So if we can show that for some $m$, $ \mathcal{O}_Y (-mK_Y)$ is also generated by global sections, we can conclude equation $(1)$ since both these two sheaves have the same global sections as we proved. 
To this aim, we need the following lemma:
\begin{lem} (Relative Basepoint  Free Theorem, c.f. \cite{KM}) Let $Y$ be a smooth variety,  $f: Y \rightarrow X$ be a projective morphism. Let $D$ be an $f$-nef Cartier divisor and $aD-K_Y$ is $f$-nef and $f$-big for some $a\ge 0$, then $bD$ is $f$-free for some $b\gg 0$.
\end{lem}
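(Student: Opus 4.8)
The plan is to follow the Kawamata--Shokurov strategy for the Basepoint Free Theorem, adapted to the relative setting. Since $f$-freeness can be checked locally on the target, I would first replace $X$ by an affine open subset; the assertion then becomes that $\mathcal{O}_Y(bD)$ is globally generated for all $b \gg 0$, while $f$-nefness and $f$-bigness become ordinary nefness and bigness over the affine base. Because $Y$ is smooth, the pair $(Y,0)$ is automatically klt, so $K_Y$ plays the role of $K_Y+\Delta$ with $\Delta=0$, and $aD-K_Y$ is the nef-and-big divisor that drives the vanishing arguments. Note that for any log resolution $g\colon W\to Y$ one has $K_W=g^*K_Y+\sum a_iE_i$ with $a_i\ge 1>0$, since a smooth variety is terminal; this positivity is what makes the discrepancy bookkeeping work in our favor.

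The argument then breaks into three parts. The first is a \emph{non-vanishing} step: invoking Shokurov's Non-vanishing Theorem in its relative form (see \cite{KM}), one produces, for some positive integer, a nonzero section of $\mathcal{O}_Y(bD)$, using precisely that $aD-K_Y$ is nef and big. The second, and central, part is a \emph{base-locus reduction}: writing $B_b$ for the base locus of $\lvert bD\rvert$ over the affine base, I would show that whenever $B_b\ne\emptyset$ there is a further multiple whose base locus is strictly smaller, so that Noetherian induction forces the stable base locus to become empty. The third is a \emph{patching} step: freeness along an arithmetic progression of exponents, combined with the non-vanishing sections, yields freeness of $bD$ for every sufficiently large $b$.

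The engine of the reduction step is relative Kawamata--Viehweg vanishing. Given a point $x\in B_b$, I would take a section of a suitable $\lvert kD\rvert$ and, using it with high multiplicity, construct an effective $\mathbb{Q}$-divisor $D'\sim_{\mathbb{Q}} cD$ such that the pair $(Y,D')$ has log canonical threshold exactly $1$ at $x$, with an isolated non-klt place lying over $x$ and the pair remaining klt on a punctured neighborhood --- the \emph{tie-breaking} construction. Passing to a log resolution of $(Y,D')$ and writing the twisting divisor $g^*(bD)-K_W$ as a $g$-nef and $g$-big class plus the chosen boundary, vanishing applies to the ideal sheaf cutting out the distinguished center. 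The resulting surjection on global sections yields a section of $\mathcal{O}_Y(b'D)$ not vanishing at $x$, so $x\notin B_{b'}$, and the base locus shrinks.

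I expect the tie-breaking construction to be the principal obstacle. One must tune the coefficients of the auxiliary divisor so that the non-klt locus of $(Y,D')$ is supported exactly where it should be, isolating a single component of the base locus over the chosen point while keeping the pair klt elsewhere, and \emph{simultaneously} keeping the residual twist nef and big so that Kawamata--Viehweg vanishing applies. The relative bigness of $aD-K_Y$ is exactly what supplies the positivity needed to manufacture such a divisor after localizing on $X$; balancing this positivity against the prescribed singularity at $x$ is the delicate point, and everything else is bookkeeping with discrepancies and cohomology.
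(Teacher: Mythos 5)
You should first be aware that the paper contains no proof of this lemma: it is stated with ``c.f.\ \cite{KM}'' and used as a black box inside the proof of Theorem 1.2 (it is the relative basepoint-free theorem of \cite{KM}). So your proposal cannot be matched against an argument in the paper; what it does is reconstruct, in outline, the proof of the cited result, and the outline is the right one: localize to an affine base so that $f$-nefness and $f$-bigness become ordinary nefness and bigness and $f$-freeness becomes global generation, observe that $(Y,0)$ is klt (indeed terminal) because $Y$ is smooth, and then run the Kawamata--Shokurov machine --- Shokurov non-vanishing, base-locus reduction via relative Kawamata--Viehweg vanishing and tie-breaking, and a final patching step. This is exactly the content of the reference the paper points to, so there is no conflict of approach, only a difference in what is taken as known.

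Two caveats. First, the two genuinely hard ingredients (non-vanishing, and the tie-breaking with its induction on dimension inside the reduction step) are invoked rather than carried out, so what you have is a correct plan rather than a complete proof; that is defensible here precisely because the lemma is itself a standard quoted theorem. Second, your patching step is misstated: freeness along a single arithmetic progression of multiples, combined with existence of sections, does \emph{not} give freeness of $bD$ for all large $b$, since adding a possibly-vanishing section to a free linear system only bounds the base locus by that section's divisor. The standard fix is to run the reduction step for powers of two distinct primes $p$ and $q$, obtaining freeness of $|p^kD|$ and $|q^lD|$, and then write any $b\gg 0$ as a nonnegative integral combination $\alpha p^k+\beta q^l$. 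Note finally that the lemma as stated asks only for \emph{some} $b\gg 0$ (and that is all the paper uses), so for the purpose at hand the patching step is dispensable: non-vanishing plus the base-locus reduction for powers of a single integer already suffice.
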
 

In our case, $-K_Y$ is the divisor $D$ in this lemma. It is $f$-nef by the condition in the proposition and $f$-big because $f$ is a birational morphism so the general fiber is of dimension zero (recall that a divisor $D$ being $f$-big means that the restriction of $D$ to general fiber is big ). As a result, by the lemma we have a surjective map $f^*f_* \mathcal{O} _Y(-bK_Y) \rightarrow \mathcal{O}_Y(-bK_Y)$ for some $b \gg 0$, or equivalently, there is a morphism $Y \rightarrow \mathbb{P} _X (f_* \mathcal{O}_Y(-bK_Y))$ determined by the surjective morphism. 
So we can conclude that the sheaf $\mathcal{O}_Y(-bK_Y)$ is generated by $\Gamma (X, f_* \mathcal{O} _Y (-bK_Y)) \cong \Gamma (Y,  \mathcal{O} _Y (-bK_Y))$. 
Thus the first part of Theorem1.2 is proved.

As shown by the above arguments, there is an integer $m>0$ such that $(\mathcal{O}_X(-mK_X) \cdot \mathcal{O}_Y ) $ is invertible and $K_{m,Y/X}=0$. 
Then the second part of Theorem 1.2 follows directly from Lemma 2.3 and equation (1) in Section 2.
\end{proof}

Theorem 1.2 also gives result on the singularities of image as following:
\begin{cor}
Given a birational morphism $f: Y \rightarrow X$, if the exceptional locus of $f$ is of codimension at least two and $-K_Y$ is relatively nef, then X has rational singularities.
\end{cor}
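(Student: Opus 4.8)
The plan is to read this off from Theorem 1.2 together with the earlier corollary that log terminal singularities are rational. Reading the hypotheses as in Theorem 1.2 (so that $Y$ is smooth and $X$ normal, which is forced if the conclusion about rational singularities is to make sense), the morphism $f\colon Y\to X$ is a proper birational morphism from a smooth $Y$ whose exceptional locus has codimension at least two, and $-K_Y$ is $f$-nef. These are precisely the hypotheses of Theorem 1.2, so that theorem applies and $X$ is log terminal in the sense of de Fernex and Hacon. The corollary that a normal variety with log terminal singularities has rational singularities then gives the claim immediately.

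If one prefers to exhibit the mechanism directly rather than black-boxing Theorem 1.2, I would verify the definition of rational singularities by hand. Since $Y$ is smooth, $f$ is itself a resolution of $X$, so $X$ has rational singularities exactly when $R^i f_*\mathcal{O}_Y=0$ for all $i>0$ (the equality $f_*\mathcal{O}_Y=\mathcal{O}_X$ being automatic because $X$ is normal and $f$ is proper birational). Now $K_Y$ is Cartier, so $-K_Y$ is an $f$-nef (by hypothesis) and $f$-big (because $f$ is birational, hence the general fibre is a point) Cartier divisor. Relative Kawamata--Viehweg vanishing then yields
\[
R^i f_*\mathcal{O}_Y \;=\; R^i f_*\mathcal{O}_Y\bigl(K_Y+(-K_Y)\bigr)\;=\;0\qquad(i>0),
\]
which is exactly the required vanishing. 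This is the same vanishing input that powers the Relative Basepoint Free Theorem used in the proof of Theorem 1.2.

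The only genuine content is the vanishing $R^i f_*\mathcal{O}_Y=0$, so the one point to watch is the applicability of the vanishing theorem: one must confirm that $-K_Y$ is both $f$-nef and $f$-big. Nefness is assumed, and bigness is automatic from birationality, exactly as argued in the proof of Theorem 1.2. Everything else---normality of $X$ giving $f_*\mathcal{O}_Y=\mathcal{O}_X$, and $Y$ smooth making $f$ a resolution---is formal, so I do not expect a serious obstacle beyond citing the vanishing theorem correctly in the relative setting.
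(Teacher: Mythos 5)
Your proposal is correct and takes the same route the paper intends: the corollary is stated as an immediate consequence of Theorem 1.2 (which gives that $X$ is log terminal in the sense of de Fernex--Hacon) combined with the earlier corollary that log terminal normal varieties have rational singularities. Your supplementary direct argument via relative Kawamata--Viehweg vanishing, giving $R^i f_*\mathcal{O}_Y=0$ for $i>0$ from the $f$-nefness and $f$-bigness of $-K_Y$, is also sound (and even shows the codimension-two hypothesis is not needed for that vanishing), but it is an optional strengthening rather than a different proof of the stated corollary.
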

Along this line, see also \cite{FG}.

\section{Generic Determinatal Varieties }
In this section, we consider non zero linear maps  from a dimension $e$ vector space $E$ to a dimension $f$ vector space $F$. The collection of all these maps can be naturally represented by $X := \mathbb{P} ^{ef-1}$, each point corresponds to  a $f\times e$ matrix modulo scalar. 
Let  $X_k$ be the the locus on $X$ where the corresponding map is of rank at most $k$. It is well known that $X_k$ is normal, and Cohen-Macaulay if it is nonempty. 
In this section, we would like to prove moreover that it is log terminal.

First we would like to construct a resolution of $X_k$. Consider the following incidence correspondence, 
\begin{equation*}
Y_k := \{ (A,V) \in X \times {G}(e-k, E) | AV=0\},
\end{equation*}
where ${G}(e-k, E)$ is Grassmannian of $e-k$ subspace of $E$. To simplify notations, let ${G}(e-k, E)$ be denoted by 
$\mathbb{G}$ from now on. 
On $\mathbb{G}$ we have the tautological bundle sequence $0 \rightarrow S \rightarrow E \rightarrow Q \rightarrow 0$, where $S$ is the universal sub bundle  of rank $e-k$.   
Then the projection map   $p_2 :Y_k \rightarrow \mathbb{G}$ can be thought as the total space of projective bundle $\mathbb{P}(Q^*\otimes F)$ over $\mathbb{G}$, so $Y_k$ is smooth. 

Now we are going to prove $p_1:Y_k \rightarrow X_k$ is a small resolution.
First note that by the construction $p_1: Y_k \rightarrow X $ is a surjective map to $X_k$. 
Also since we are considering generic determintal varieties, $X_k$ is of codimension $(e-k)(f-k)$ in $X$, and the singular locus is $X_{k-1}$.
For any non-singular point $A\in X_k$, $A$ is of exact rank $k$, so the fiber in $Y_k$ is an unique point $(A,ker A)$.
Since the non-singular locus is an open dense set in $X_k$,  $p_1$ is a birational map.
We also know that $X_{k-1}$ is of codimension $e-k+f-k+1$ in $X_k$, and
over the generic point of $X_{k-1}$, the fiber is of dimension $e-k$.
So the exceptional locus is of codimension bigger than two, i.e., $p_1$ is a small resolution.

Next we need to show the anticanonical bundle $-K_{Y_k}$ on $Y_k$ is nef, to this aim, we consider the tangent bundle $T_{Y_k}$. First note that on $Y_k$ we also have tautology sequence $0 \rightarrow U \rightarrow Q^*\otimes F\rightarrow (Q^*\otimes F)/U \rightarrow 0$, $U$ is of rank one. (Here we ignore all the pull back symbol to simplify the expression). Then we can write the first Chern class of $T_{Y_k}$ as
\begin{eqnarray*} 
c_1(-K_{Y_k})=c_1(T_{Y_k})&=&c_1(T_{\mathbb{G}})+c_1(T_{Y_k/\mathbb{G}})\\
&=& c_1(S^* \otimes Q) + c_1(U^*\otimes (Q^*\otimes F)/U)\\
&=& e\cdot \sigma _1+(kf-1)\cdot \sigma _2-f\cdot \sigma _1 +\sigma _2\\
&=& (e-f) \cdot \sigma _1 + kf\cdot \sigma _2
\end{eqnarray*} 
where $ \sigma _1$ (resp,  $\sigma _2$) denote the first Chern class of $S^*$ (resp, $U^*$).
So we have $-K_{Y_k}$ is nef if $e \ge f$ and by Theorem 1.2 $X_k$ is log terminal in this case. 

On the other hand, if $e\le f$ we consider the following resolution,
\begin{equation*}
Z_k := \{ (A,V) \in X \times {G}(k, F) | Im (A) \subseteq V\},
\end{equation*}
Over $\mathbb{G}:= G(k, F)$, there is a universal sequence $0 \rightarrow S \rightarrow F$ where $S$ is of rank $k$. Then $Z_k$ can be seen as total space of the bundle $\mathbb{P}(E^* \otimes S)\rightarrow \mathbb{G}$. 
Following the same argument as above, $p_2 : Z_k\rightarrow X_k$ is a small resolution. Moreover, still denote the tautology bundle on $\mathbb{P}(E^* \otimes S)$ by $U$, we have 
\begin{eqnarray*} 
c_1(-K_{Z_k})=c_1(T_{Z_k})&=&c_1(T_{\mathbb{G}})+c_1(T_{Z_k/\mathbb{G}})\\
&=& c_1(S^* \otimes Q) + c_1(U^*\otimes (E^*\otimes S)/U)\\
&=& f\cdot \sigma _1+(ke-1)\cdot \sigma _2-e\cdot \sigma _1 +\sigma _2\\
&=& (f-e) \cdot \sigma _1 + ke\cdot \sigma _2
\end{eqnarray*} 
is nef again. So we prove in the case $e\le f$, $Z_k$ is a resolution of $X_k$ with $-K_{Z_k}$ being nef.  
In conclusion, by Theorem 1.2, the generic determinantal varieties are log terminal, and if we chose suitable boundary divisors they are canonical as pairs.    

\begin{rem}
In this section, we constructed two resolution $Y_k$ and $Z_k$, which in fact are flips to each other over $X_k$ if we fix $E$ and $F$. 
The idea is inspired from \cite{Tot}, where a similar construction was used to give a counterexample of the rigidity of the nef cone.

The result that the generic determinantal varieties being log terminal was first proved by Israel Vainsencher in \cite{IV},  see also section 3 in \cite{JS} for explicit calculation of log discrepancies.

To understand generic determinantal  varieties from the view point of arc space, see \cite{Roi}

After finishing this paper, the author is informed that J-C Hsiao \cite{Hsiao} also got the same result independently by different method.
\end{rem}

%%%%%%%%%%%%%%%%%%%%%%%%%%%%%%%%%%%%%%%%%%%%%%%%%%%%%%%%
%%%%%%%%%%%%%%%%%%%%%%%%%%%%%%%%%%%%%%%%%%%%%%%%%%%%%%%%

\section{$W^r_d$ in $\mathbf{Pic}^d (C)$}
We first recall the definition and construction of $W^r_d$.
\begin{equation*}
Supp(W^r_d)=\{L \in \mathbf{Pic}^d (C) | h^0(L) \geq r+1                \}
\end{equation*} 
To give $W^r_d$ a scheme structure we can think it as a determinantal variety in $\mathbf{Pic}^d (C)$ in the following way: Consider projection map
\begin{equation*}
v: C \times  \mathbf{Pic}^d (C) \rightarrow \mathbf{Pic}^d (C)
\end{equation*}
Fix a Poincare bundle $\mathcal{L}$ on $C \times \mathbf{Pic}^d (C)$ and take an effective divisor $E$ positive enough so that  $H^1(C,\mathcal{L} |_C \otimes E)=0$, or equivalently, $R^1v_* \mathcal{L}(\Gamma)=0$ where $\Gamma $ is defined as $E \times \mathbf{Pic}^d (C)$. Then we have an induced map $\gamma : v_* \mathcal{L}(\Gamma) \rightarrow v_* \Gamma $ between two locally free sheaves of rank $d+m+1-g$ and $m$ respectively. $W^r_d$ can be defined as the degeneracy locus of $rk(\gamma) \le m+d-g-r$ in  $\mathbf{Pic}^d (C)$.  It is of expected dimension $\rho = g-(r+1)(g-d+r)$ and is nonempty if $\rho \ge 0$ (\cite{ACGH}).

Denote $v_* \mathcal{L}(\Gamma)$ by $K^0$ and $v_* \Gamma$ by $K^1$. There is a natural resolution of $W^r_d$ constructed as the following: Consider the Grassmann bundle $\pi : G(r+1, K^0) \rightarrow \mathbf{Pic}^d (C)$, with an universal sequence
\begin{equation*}
0\rightarrow S \rightarrow \pi ^* K^0 \rightarrow Q \rightarrow 0
\end{equation*} 
then the resolution of $W^r_d$ is defined as $G^r_d(C):=Zero(S^* \otimes \pi ^* K^1)$. It is smooth by the following theorem,

\begin{thm} (Smoothness Theorem \cite{ACGH} V.1.6)
Let C be a general curve of genus g. Let d,r be integers such that $d\ge 1$ and $r\ge 0$. Then $G^r_d (C)$ is smooth of dimension $\rho$.
\end{thm}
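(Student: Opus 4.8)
The plan is to deduce smoothness and the dimension count from an infinitesimal computation of the Zariski tangent space of $G^r_d(C)$ at an arbitrary point $(L,V)$, combined with the classical Gieseker--Petri theorem. Throughout, write $K_C$ for the canonical bundle of $C$ and let
\[
\mu_{0,V}\colon V\otimes H^0(C,K_C\otimes L^{-1})\longrightarrow H^0(C,K_C)
\]
be the Petri (cup-product) map, where $V\subseteq H^0(C,L)$ is the $(r+1)$-dimensional subspace parametrized by the point. The strategy is to sandwich the local dimension: I will show $\rho\le\dim_{(L,V)}G^r_d(C)$ from the determinantal construction, and $\dim T_{(L,V)}G^r_d(C)\le\rho$ from the injectivity of $\mu_{0,V}$, so that all three numbers coincide and the point is smooth.

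First I would establish the lower bound. As recalled in the excerpt, $G^r_d(C)$ is the zero locus of the section of $S^*\otimes\pi^*K^1$ induced by $\gamma$ on the Grassmann bundle $\pi\colon G(r+1,K^0)\to\mathbf{Pic}^d(C)$. Since $S^*\otimes\pi^*K^1$ has rank $(r+1)m$ and $\dim G(r+1,K^0)-(r+1)m=\rho$, every irreducible component of $G^r_d(C)$ has codimension at most the rank of the bundle, hence dimension at least $\rho$. This is the existence/lower-bound half of Brill--Noether theory and requires no genericity of $C$.

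Second I would compute the tangent space. A first-order deformation of $(L,V)$ moves $L$ in the direction of some $\dot L\in H^1(\mathcal O_C)=T_L\mathbf{Pic}^d(C)$, and $V$ lifts along $\dot L$ precisely when the cup product $H^0(L)\otimes H^1(\mathcal O_C)\to H^1(L)$ carries $V\otimes\dot L$ into the first-order motions of the sections. Dualizing this condition by Serre duality, using $H^1(L)^\vee=H^0(K_C\otimes L^{-1})$ and $H^1(\mathcal O_C)^\vee=H^0(K_C)$, identifies the obstruction with the Petri map and yields the dimension formula
\[
\dim T_{(L,V)}G^r_d(C)=\rho+\dim\ker\mu_{0,V}.
\]
I would then invoke the Gieseker--Petri theorem: for a general curve $C$, the full Petri map $\mu_0\colon H^0(L)\otimes H^0(K_C\otimes L^{-1})\to H^0(K_C)$ is injective for every line bundle $L$, so its restriction $\mu_{0,V}$ is injective and $\ker\mu_{0,V}=0$. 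Combining the steps, at each point we obtain $\rho\le\dim_{(L,V)}G^r_d(C)\le\dim T_{(L,V)}G^r_d(C)=\rho$, forcing smoothness of pure dimension $\rho$.

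The main obstacle is plainly the Gieseker--Petri theorem, which is the deep input that makes the genericity hypothesis on $C$ essential; the tangent-space formula and the lower bound are comparatively formal. I would prove Gieseker--Petri by degeneration: injectivity of $\mu_0$ is an open condition in the moduli of curves, so it suffices to produce a single special curve on which the Petri condition holds and then apply semicontinuity. Carrying this out rigorously — for instance via a chain of elliptic curves or a flag curve together with a limit linear series analysis controlling the behavior of $\mu_0$ under specialization — is the technical heart of the argument, since one must ensure the limiting Petri maps remain injective and that the relevant line bundles are accounted for in the degeneration.
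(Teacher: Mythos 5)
There is nothing in the paper to compare against here: the statement is quoted verbatim from \cite{ACGH} V.1.6 as an external input (it is Gieseker's theorem), and the paper offers no proof of it. Your argument is the standard one, and is in fact the route taken by the cited reference itself: the determinantal lower bound $\dim_{(L,V)}G^r_d(C)\ge\rho$ (zero locus of a section of a rank-$(r+1)m$ bundle on a Grassmann bundle of dimension $\rho+(r+1)m$, no genericity needed), the tangent-space identity $\dim T_{(L,V)}G^r_d(C)=\rho+\dim\ker\mu_{0,V}$ (this is ACGH IV.4.1), and the Gieseker--Petri theorem to kill the kernel for general $C$; the sandwich $\rho\le\dim_{(L,V)}G^r_d(C)\le\dim T_{(L,V)}G^r_d(C)=\rho$ then gives smoothness of pure dimension $\rho$. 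Both halves of this reduction are correct, including the observation that injectivity of the full Petri map $\mu_0$ passes to the restriction $\mu_{0,V}$.

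The caveat is that what remains unproved is precisely Gieseker--Petri, which carries essentially all of the depth of the statement, and your sketch of it is the one genuinely fragile point. The openness claim needs more care than you give it: the Petri condition quantifies over \emph{all} line bundles on $C$, so openness of the Petri locus in $M_g$ is not formal semicontinuity but follows from properness of the relative $G^r_d$ over $M_g$; worse, the special curves on which Petri can be verified directly (chains of elliptic curves, flag curves) are nodal and hence lie outside $M_g$, so the specialization argument must be run on a compactified moduli space with limit linear series (Eisenbud--Harris) or replaced by Gieseker's original degeneration or Lazarsfeld's K3 argument. Since you explicitly flag this as the technical heart rather than claiming to have done it, your proposal is best read as a correct and complete reduction of the Smoothness Theorem to the Gieseker--Petri theorem, which is exactly how the literature the paper cites organizes the proof.
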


Since $G^r_d$ is smooth, we have the exact sequence 
\begin{equation*}
0\rightarrow T_{G^r_d} \rightarrow T_{Gr}|_{G^r_d} \rightarrow N_{G^r_d /Gr} \rightarrow 0
\end{equation*}
where $Gr$ is the Grassmann bundle $ G(r+1, K^0) \rightarrow \mathbf{Pic}^d (C)$ and $N_{G^r_d /Gr}$ in this case is $S^* \otimes \pi ^* K^1$. So this exact sequence can be translated to 
\begin{equation*}
0\rightarrow T_{G^r_d} \rightarrow (S^* \otimes \pi ^*K^0/S) | _{G^r_d}\rightarrow S^* \otimes \pi ^* K^1 \rightarrow 0
\end{equation*}
As a result, $c_1(T_{G^r_d} )=(d+1-g)\cdot c_1(S^*)$, i.e., $T_{G^r_d}$ is relatively nef if $d \ge g-1$. 
However, by the natural correspondence $L\rightarrow K_c \otimes L^\vee$ we have isomorphism
$W^r_d \cong W_{2g-2-d}^{g-d+r-1}$.
The $d \le g-1$ case is covered by this isomorphism, so it suffices to assume $d \ge g-1$.

By Riemann-Roch theorem, for $r \le d-g$ we have $W^r_d(C)=\mathbf{Pic}^d (C)$, which is smooth
, so we can limit ourself to $r > d-g$.
In this case, the singular locus of $W^r_d(C)$ is $W^{r+1}_d(C)$, and the dimension will be:
\begin{eqnarray*}
dim(W^{r+1}_d(C))&=&g-(r+2)(r+1-d+g)\\
&=&\rho -(r+1)-(r-d+g)-1
\end{eqnarray*}
However the general fiber of $\pi$ restricted to $W^{r+1}_d(C)$ is of dimension $r+1$, so the exceptional locus of $\pi$ is of codimension at least two, i.e., $\pi$ is a small resolution.
Combining the result and Theorem 1.2,  we conclude that 
\begin{prop}
let C be a general smooth  curve,  then $W^r_d(C)$ is log terminal. In particular, it has rational singularities.
\end{prop}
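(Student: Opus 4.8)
The plan is to realize the proposition as a direct application of Theorem 1.2, so I must exhibit, for a general curve $C$, a small resolution $f\colon Y\to W^r_d(C)$ with $Y$ smooth and $-K_Y$ relatively nef, and then appeal to the fact that log terminal singularities are rational for the final assertion. Before that I would make the two harmless reductions already signalled in the section. First, the Serre-duality isomorphism $W^r_d(C)\cong W^{g-d+r-1}_{2g-2-d}(C)$ allows me to assume $d\ge g-1$. Second, Riemann--Roch gives $W^r_d(C)=\mathbf{Pic}^d(C)$, which is smooth and hence trivially log terminal, whenever $r\le d-g$; so I may assume $r>d-g$, i.e. $r-d+g\ge 1$.

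The candidate resolution is $\pi\colon G^r_d(C)\to W^r_d(C)$, where $G^r_d(C)=\mathrm{Zero}(S^*\otimes\pi^*K^1)$ sits inside the Grassmann bundle $Gr=G(r+1,K^0)\to\mathbf{Pic}^d(C)$. I would then check the three hypotheses of Theorem 1.2 in turn. (i) Smoothness of $Y=G^r_d(C)$ is exactly the Smoothness Theorem, and this is the one place where the generality of $C$ is genuinely used. (ii) For smallness, I observe that the fibre of $\pi$ over $L$ is the Grassmannian $G(r+1,H^0(L))$ of $(r+1)$-planes in $H^0(L)$; over a point $L\notin W^{r+1}_d$ one has $\dim H^0(L)=r+1$, so the fibre is a single point and $\pi$ is birational. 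The exceptional locus therefore lies over the singular locus $W^{r+1}_d(C)$, and combining $\dim W^{r+1}_d=\rho-(r+1)-(r-d+g)-1$ with the generic fibre dimension $r+1$ gives exceptional codimension $r-d+g+1\ge 2$. (iii) For relative nefness of $-K_{G^r_d}$, I would use the adjunction sequence for the zero-locus $G^r_d\subset Gr$, whose normal bundle is $S^*\otimes\pi^*K^1$; this produces the identity $c_1(-K_{G^r_d})=(d+1-g)\,c_1(S^*)$ already recorded in the section. Since the dual universal subbundle $S^*$ is globally generated along the fibres of $\pi$, it is $\pi$-nef, and the assumption $d\ge g-1$ makes the coefficient $d+1-g$ nonnegative, whence $-K_{G^r_d}$ is $\pi$-nef.

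With all three hypotheses verified, Theorem 1.2 yields that $W^r_d(C)$ is log terminal (and that a suitable boundary makes it canonical as a pair), and the rationality statement follows since log terminal varieties have rational singularities. The main obstacle I anticipate lies in steps (ii) and (iii): one must be sure that the fibre-dimension count over $W^{r+1}_d(C)$ really attains the \emph{expected} values---again leaning on the generality of $C$ through the Smoothness Theorem---and one must read the Chern-class identity as computing the canonical class relative to the map $\pi$ to $W^r_d(C)$, rather than merely to the projection onto $\mathbf{Pic}^d(C)$, so that ``relatively nef'' is interpreted with respect to the correct morphism. Once these points are pinned down, the remaining verifications are routine.
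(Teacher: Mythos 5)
Your proposal is correct and follows essentially the same route as the paper: the same resolution $\pi\colon G^r_d(C)\to W^r_d(C)$ inside the Grassmann bundle, the same reductions via the isomorphism $W^r_d\cong W^{g-d+r-1}_{2g-2-d}$ and Riemann--Roch, the same dimension count over $W^{r+1}_d(C)$ for smallness, and the same identity $c_1(T_{G^r_d})=(d+1-g)\,c_1(S^*)$ for relative nefness, all feeding into Theorem 1.2. Your extra remarks (the explicit exceptional codimension $r-d+g+1\ge 2$, and justifying $\pi$-nefness by global generation of $S^*$ along the fibers of the map to $W^r_d(C)$ rather than to $\mathbf{Pic}^d(C)$) only make explicit what the paper leaves implicit.
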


%%%%%%%%%%%%%%%%%%%%%%%%%%%%%%%%%%%%%%%%%%%%%%%%%%%%%%%%
%%%%%%%%%%%%%%%%%%%%%%%%%%%%%%%%%%%%%%%%%%%%%%%%%%%%%%%%

\section{Schubert Varieties in $G(k,n)$}

We construct an iterated tower of Grassmann bundles to resolve the singularities, see \cite{I} and \cite{Z}.  
Under some conditions on the canonical divisor, we can show that this resolution satisfies the requirements in Theorem 1.2.
After finishing this paper, the author was informed that  D. Anderson and A. Stapledon proved that all Schubert varieties are log terminal. Since their result is more general we omit the proof and refer the interested readers to  \cite{DAS}. 

With their result in \cite{DAS} and Theorem 1.1, we have an interesting corollary.
Before the statement of the corollary we first recall the definition of intersection variety, see \cite{Br} and \cite{IB}.

\begin{defn}
Given Schubert varieties $X_{u_1}, \cdots , X_{u_r}\in G(k, n)$, and let $g_{\bullet}=(g_1,\cdots , g_r)$
be a general r-tuple of elements in $G^r$, where $G$ is $GL_n$.
The intersection variety $R(u_1,\cdots,u_r; g_{\bullet})$ is defined as the intersection of the translated Schubert varieties $g_iX_{u_i}$ :
\begin{equation*}
R(u_1,\cdots,u_r; g_{\bullet})=g_1X_{u_1} \cap\cdots \cap g_rX_{u_r}.
\end{equation*}
\end{defn}

\begin{cor}(Suggested by Izzet Coskun)
If the singular locus of $X_{u_1}, \cdots , X_{u_r}$ are disjoint with each other, then the intersection 
variety $R(u_1,\cdots,u_r; g_{\bullet})$ is log terminal.
\end{cor}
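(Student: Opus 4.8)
The plan is to reduce the statement to a single application of Theorem 1.1 by means of the diagonal embedding. Put $X=G(k,n)$, let $G=GL_n$, and pass to the product $X^r=G(k,n)^r$, on which $G^r$ acts transitively, so that $X^r$ is again a homogeneous variety. Write $\Delta\cong G(k,n)$ for the small diagonal, which is smooth, and set $Y:=X_{u_1}\times\cdots\times X_{u_r}\subset X^r$, the product Schubert variety. For $g_\bullet=(g_1,\dots,g_r)\in G^r$ the translate is $Y^{g_\bullet}=g_1X_{u_1}\times\cdots\times g_rX_{u_r}$, and intersecting with the diagonal recovers the intersection variety, $\Delta\cap Y^{g_\bullet}\cong R(u_1,\dots,u_r;g_\bullet)$, since a point $(p,\dots,p)$ of $\Delta$ lies in $Y^{g_\bullet}$ exactly when $p\in g_iX_{u_i}$ for every $i$. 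Hence it suffices to prove that $\Delta\cap Y^{g_\bullet}$ is log terminal for general $g_\bullet$.

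Granting that $Y$ is log terminal, Theorem 1.1 applied on $X^r$ with the smooth subvariety $Z=\Delta$ and the log terminal subvariety $Y$ yields a nonempty open $U\subset G^r$ such that $Y^{g_\bullet}\cap\Delta$ is log terminal for all $g_\bullet\in U$, which is precisely the desired conclusion. So the entire problem is to check that the product $Y$ is log terminal in the sense of de Fernex and Hacon. By \cite{DAS} each factor $X_{u_i}$ is log terminal (and Schubert varieties are normal and Cohen--Macaulay, so the notion applies), and this is where I would spend the hypothesis that the singular loci $S_i:=\Sing(X_{u_i})$ are disjoint. The idea is to show that along the locus $\Delta\cap Y^{g_\bullet}$ at most one factor of $Y^{g_\bullet}$ is singular at any given point; then locally $Y^{g_\bullet}$ is a product of a single log terminal factor with smooth factors, and Proposition \ref{prop1} (a smooth morphism onto a log terminal base has log terminal total space, applied to the projection onto the singular factor) shows $Y^{g_\bullet}$ is log terminal near that locus, which is all Theorem 1.1 requires. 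Normality of the intersection itself is obtained exactly as in the proof of Theorem 1.1, by checking $R_1$ and $S_2$ through the flatness of the relevant projections and Serre's criterion.

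The main obstacle is precisely the control of the multiply-singular locus: for general $g_\bullet$ one must rule out that any point of $R$ is a singular point of two distinct translated factors, that is, one wants $g_iS_i\cap g_jS_j\cap\bigcap_l g_lX_{u_l}=\emptyset$ for all $i\ne j$. A Kleiman-type dimension count shows this locus has codimension at least $(d_i-s_i)+(d_j-s_j)\ge 4$ inside $R$, where $d_i=\dim X_{u_i}$ and $s_i=\dim S_i\le d_i-2$ by normality; but codimension four alone does not force emptiness, so converting the disjointness $S_i\cap S_j=\emptyset$ into the actual vanishing of this intersection is the crux of the argument and the step I expect to be hardest. Should this reduction prove too rigid, the cleaner alternative is to bypass it and establish log-terminality of $Y$ directly: by the boundary characterization of log terminal varieties (Proposition 2.4), choose boundaries $\Delta_i$ making each $(X_{u_i},\Delta_i)$ klt, observe that $(Y,\sum_i \mathrm{pr}_i^*\Delta_i)$ is then klt, and conclude that $Y$ is log terminal. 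This second route dispenses with the disjointness hypothesis altogether, at the cost of appealing to the product behavior of klt pairs rather than to Proposition \ref{prop1}.
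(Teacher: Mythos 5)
Your ``cleaner alternative'' (the last sentences of your proposal) is a correct proof, and it is genuinely different from the paper's argument. The paper never passes to $X^r$: it reduces to $r=2$, observes that for general $g_1,g_2$ the singular loci of the two translates $g_1X_{u_1}$ and $g_2X_{u_2}$ are disjoint, and then covers $R$ by two open sets --- the smooth locus $U$ of $g_2X_{u_2}$ and the smooth locus $V$ of $g_1X_{u_1}$ --- applying Theorem 1.1 separately on each piece, with $Z$ the smooth factor and $Y$ the other factor (log terminal by \cite{DAS}); since log terminality is a local condition, this covers all of $R$. Your route instead makes one global application of Theorem 1.1 on $X^r$ with $Z=\Delta$ and $Y=X_{u_1}\times\cdots\times X_{u_r}$, and settles log terminality of $Y$ by Proposition 2.4 together with the standard fact that a product of klt pairs is klt (visible on the product of log resolutions, where discrepancies add). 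This buys something real: the disjointness hypothesis is never used, so you prove the stronger statement that $R(u_1,\dots,u_r;g_\bullet)$ is log terminal for \emph{arbitrary} Schubert varieties, whereas the paper's covering argument needs the hypothesis essentially, since without it a point of $R$ could be singular on both factors and lie in neither chart.

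Two remarks on your primary route, which you correctly flagged as incomplete. First, the step you could not close --- that $g_iS_i\cap g_jS_j=\emptyset$ for general translates --- is exactly the point where the paper spends the hypothesis, and it follows not from a dimension count but from a cohomological one: $S_i\cap S_j=\emptyset$ forces $[S_i]\cdot[S_j]=0$ in $H^*(X)$, and by Kleiman's original theorem \cite{Kl} the general translates meet properly and generically transversally, so a nonempty intersection would represent a nonzero effective class equal to $[S_i]\cdot[S_j]$; hence the intersection is empty. Second, even granting that, your primary route misreads Theorem 1.1: it requires $Y$ itself to be log terminal \emph{before} translation, not merely $Y^{g_\bullet}$ to be log terminal near a locus that depends on $g_\bullet$, so one would have to rerun the proof of Theorem 1.1 rather than quote it. Neither repair is needed, since your second route is complete and supersedes the first.
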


\begin{proof}
It suffices to prove the case where $r=2$, which is also called {\it{Richardson Variety}}.
Consider $g_1X_{u_1}$ and $g_2X_{u_2}$ , their singular locus are disjoint since $g_1$ and $g_2$ are general.
Let $U$ be the smooth locus of $g_2X_{u_2}$, then $g_1X_{u_1} \cap U$ is log terminal by Theorem 1.1.
 The same is true for $U^c\cap g_1X_{u_1}=U^c\cap V$, where $V$ is the smooth locus of $g_1X_{u_1}$.
\end{proof}

\end{document}